\numberwithin{equation}{section}
\theoremstyle{plain}
\newtheorem{theorem}{Theorem}[section]
\newtheorem{lemma}{Lemma}[section]
\theoremstyle{definition}
\newtheorem{definition}{Definition}[section]
\DeclareMathOperator{\Prb}{\mathbb{P}}
\DeclareMathOperator{\Expt}{\mathbb{E}}
\newcommand{\Ind}{\mathbf{1}}
\DeclareMathOperator{\Ext}{\mathsf{Ext}}
\newcommand{\Rd}{\mathbb{R}^d}
\newcommand{\Zd}{\mathbb{Z}^d}
\newcommand{\GO}{\mathcal{O}}
\newcommand{\F}{\mathcal{F}}
\newcommand{\X}{\mathcal{X}}
\newcommand{\Y}{\mathcal{Y}}
\newcommand{\Z}{\mathcal{Z}}
\newcommand{\R}{\mathcal{R}}
\newcommand{\cu}{\square}
\newcommand{\ub}{\bar{u}}
\newcommand{\ut}{\widetilde{u}}
\newcommand{\vb}{\bar{v}}
\newcommand{\Ur}{U_r}
\newcommand{\aL}{\underline{L}}
\newcommand{\aH}{\underline{H}}
\newcommand{\ec}{\lambda}
\newcommand{\phim}{\phi^{(\ec)}}
\newcommand{\Sm}{\S^{(\ec)}}
\newcommand{\ab}{\bar{\a}}
\newcommand{\A}{\mathbf{A}}
\newcommand{\f}{\mathbf{f}}
\newcommand{\p}[1]{\partial_{x_{#1}}}
\newcommand{\Fi}{\mathbf{F}}
\newcommand{\supp}{\text{supp}}
\renewcommand{\le}{\leqslant}
\renewcommand{\ge}{\geqslant}
\renewcommand{\leq}{\leqslant}
\renewcommand{\geq}{\geqslant}
\renewcommand{\epsilon}{\varepsilon}
\renewcommand{\S}{\mathbf{S}}
\renewcommand{\a}{\mathbf{a}}
\renewcommand{\u}{\mathbf{u}}
\newcommand*\widefbox[1]{\fbox{\hspace{1em}#1\hspace{1em}}}
\theoremstyle{remark}
\newtheorem*{remark}{Remark}
\author[1,2]{Chenlin GU}
\affil[1]{Ecole Polytechnique, Palaiseau, France}
\affil[2]{DMA, Ecole Normale Supérieure, PSL Research University, Paris, France}
\title{Uniform estimate of an iterative method for elliptic problems with rapidly oscillating coefficients}
\begin{document}
\maketitle

\begin{abstract}
We study the iterative algorithm proposed by S.\ Armstrong, A.\ Hannukainen, T.\ Kuusi, J.-C.\ Mourrat in \cite{armstrong2018iterative} to solve elliptic equations in divergence form with stochastic stationary coefficients. Such equations display rapidly oscillating coefficients and thus usually require very expensive numerical calculations, while this iterative method is comparatively easy to compute. In this article, we strengthen the estimate for the contraction factor achieved by one iteration of the algorithm. We obtain an estimate that holds uniformly over the initial function in the iteration, and which grows only logarithmically with the size of the domain. 
\end{abstract}

\section{Introduction}
\subsection{Main theorem}
The problem of homogenization is a subject widely studied in mathematics and other disciplines for its applications and interesting properties. Let $(\a(x),x \in \Rd)$ be a random coefficient field, which takes values in the set of $\mathbb{R}^{d \times d}$ symmetric matrices, and which we assume to be $\Zd$-stationary, with a unit range of dependence and uniformly elliptic that $ \Lambda^{-1} \vert \xi \vert^2 \leq \xi \cdot \a(x) \xi \leq \Lambda \vert \xi \vert^2$ for any $x, \xi \in \Rd$. We give ourselves a bounded domain $U \subset \Rd$ with boundary $C^{1,1}$, a scale parameter $0 < \epsilon < 1$, and for given $f \in H^{-1}(U)$ and $g \in H^1(U)$, we consider the elliptic Dirichlet problem 
\begin{equation}
\label{eq:mainEpsilon}
\left\{
	\begin{array}{ll}
	-\nabla \cdot \left(\a\left(\frac{\cdot}{\epsilon}\right) \nabla u_{\epsilon} \right) = f  & \text{ in } U, \\
	u_{\epsilon} = g & \text{ on } \partial U.
	\end{array}
\right.
\end{equation}
For the scale $0 < \epsilon \ll 1$, a naive numerical algorithm for this problem is generally very expensive, due to the rapid oscillations of the coefficients (comparatively to the size of the domain) and we have to refine the mesh of the numerical schema. Thus, different methods have been proposed to approximate the solution and one of them is to replace the conductance matrix $\a$ by a constant \textit{effective conductance matrix} $\ab$ in \cref{eq:mainEpsilon} and use its solution $\ub$ as an approximation, which can be solved quickly thanks to the multi-grid algorithm. However, $\ub$ is close to $u_{\epsilon}$ in the sense $L^2(U)$ or $H^{-1}(U)$, but not in some stronger topology, for example $H^1(U)$. Furthermore, the approximation only becomes accurate in the limit $\epsilon \rightarrow 0$, but for a small finite scale $\epsilon$, one can not expect a precision much smaller than $\epsilon$ with $\ub$.

Recently, \cite{armstrong2018iterative} proposed an iterative algorithm to solve the problem \cref{eq:main} efficiently for a given $\epsilon$-scale and with a better precision. We recap at first their algorithm here with the same formulation in large scale: Instead of considering \cref{eq:mainEpsilon} with a small scale $\epsilon$, we treat the Dirichlet problem with a dilation parameter $r \ge 1$, and set $\Ur := r U$. Then given $f \in H^{-1}(\Ur)$ and $g \in H^1(U_r)$, we consider the elliptic equation given by
\begin{equation}
\label{eq:main}
\left\{
	\begin{array}{ll}
	-\nabla \cdot \a \nabla u = f  & \text{ in } \Ur, \\
	u = g & \text{ on } \partial \Ur,
	\end{array}
\right.
\end{equation} 
and \cite{armstrong2018iterative} proposes to start by an initial guess of solution $v \in g +  H_0^1(U_r)$, and solve $u_0, \ub, \ut \in H^1_0(U)$ satisfying (with null Dirichlet boundary condition)
\begin{equation}
\label{eq:iterative}
\left\{
	\begin{array}{lll}
	(\lambda^2 - \nabla \cdot \a \nabla)u_0 &= f + \nabla \cdot \a \nabla v  & \text{ in } \Ur,\\
	-\nabla \cdot \ab \nabla \ub &= \lambda^2 u_0  & \text{ in } \Ur,\\
	(\lambda^2 - \nabla \cdot \a \nabla) \ut &= (\lambda^2 - \nabla \cdot \ab \nabla) \ub & \text{ in } \Ur.\\
	\end{array}
\right.
\end{equation}
Then, the iteration $\hat{v} := v + u_0 + \tilde{u}$ is a contraction. Since this rate of contraction is random, to estimate its size, \cite{armstrong2018iterative} introduces the notation $\GO_s$ for random variable $X$ that for any $s,\theta > 0$,
\begin{equation}\label{eq:defOinform}
X \leq \GO_s(\theta) \Longleftrightarrow \Expt\left[\exp(\max(\theta^{-1}X, 0)^s)\right] \leq 2.
\end{equation}
Informally speaking, the statement $X \leq \GO_s(1)$ tells us that $X$ has a tail lighter than $\exp(-x^s)$. 
We also introduce the shorthand notation for every $\lambda \in (0, 1]$, 
\begin{equation}
\ell(\lambda) := \left\{
	\begin{array}{ll}
	(\log (1+\lambda^{-1}))^{\frac{1}{2}} & d = 2, \\	
	1 & d > 2.
	\end{array}
\right.
\end{equation}
Then, \cite[Thoerem 1.1]{armstrong2018iterative} states that under a supplementary condition that the coefficient field $(\a(x), x \in \Rd)$ is $\alpha$-H\"older, for any $s \in (0, 2)$, we have a positive finite constant $C(s, U, \Lambda, \alpha, d)$ such that for the algorithm \cref{eq:iterative} in a domain $\Ur$ with $r \geq 1$, we have
\begin{equation}\label{eq:Old}
\Vert \nabla(\hat{v} - u) \Vert_{L^2(\Ur)} \leq \GO_s\left(C\ell^{\frac{1}{2}}(\lambda) \lambda^{\frac{1}{2}} \Vert \nabla(v - u) \Vert_{L^2(\Ur)}\right).
\end{equation}
However, in \cref{eq:Old} the contraction factor is proved for a given initialisation, but cannot be iterated to guarantee the convergence of the whole procedure. More precisely, after one iteration, the initial data becomes random and then we cannot make use of the noation $\GO_s$ \cref{eq:defOinform} with a random $\theta$ directly. In order to go past this obstacle, the authors of \cite{armstrong2018iterative} mention the possibility to get a uniform bound in \cite[eq.(1.10)]{armstrong2018iterative}, as is necessary to guarantee the convergence of the iterated method. The goal of this article is to confirm this idea and provide a proof of this uniform bound.

\smallskip

Here is our main theorem. The assumption about the random field $(\a(x), x \in \Rd)$ is the one given at the beginning of the introduction, and its precise definition can be found in \Cref{subsec:Field}, where $\Lambda$ is the constant of the uniform ellipticity condition. 
\begin{theorem}[Uniform $H^1$ contraction]
\label{thm:main}
For every bounded domain $U \subset \Rd$ with $C^{1,1}$ boundary and every $s \in (0,2)$, there exists a positive finite constant $C(U,\Lambda,s,d)$ and, for every $r \ge 2$ and $\lambda \in \left(\frac{1}{r}, \frac{1}{2} \right)$, a random variable $\Z$ satisfying
\begin{equation}
\label{e.estim.Z}
\Z \leq \GO_s\left(C \ell(\lambda)^{\frac{1}{2}}\lambda^{\frac{1}{2}}(\log r)^{\frac{1}{s}}\right),
\end{equation} 
such that the following holds.
Denote $\Ur := rU$, let $f\in H^{-1}(\Ur)$, $g\in H^1(\Ur)$, $v \in g + H^1_0(\Ur)$, let $u \in g + H^1_0(\Ur)$ be the solution of \cref{eq:main}, and let $u_0, \ub, \ut \in H^1_0(U)$ solve \cref{eq:iterative} with null Dirichlet boundary condition. Then for $\hat{v}:= v + u_0 + \ut$, we have the contraction estimate
\begin{equation}
\label{eq:contraction}
\Vert \nabla(\hat{v} - u) \Vert_{L^2(\Ur)} \leq  \Z \Vert \nabla(v - u) \Vert_{L^2(\Ur)}.
\end{equation}
\end{theorem}  
Compared with the result of \cite{armstrong2018iterative}, we have two fundamental improvements: The first one is the explicit random variable $\Z$, which is an upper bound for the contraction factor in the iteration, \emph{does not depend on the function} $v$. Hence, the algorithm can be iterated, replacing $v$ by $\hat v$, etc. In the event that $\Z \le \frac 1 2$, say, we thus obtain exponential convergence of the iterative method to the solution $u$, a conclusion that cannot be inferred from the results of \cite{armstrong2018iterative}. By the estimate \cref{e.estim.Z}, in order to guarantee that $\Z \le \frac 1 2$ with high probability, it suffices to take $\lambda$ sufficiently small that $\ell(\lambda)^{\frac{1}{2}}\lambda^{\frac{1}{2}}(\log r)^{\frac{1}{s}}$ is below a certain positive constant. The second improvement is that we do not make any assumption on the regularity of the coefficient field $x \mapsto \a(x)$, while \cite{armstrong2018iterative} assumed it to be H\"older continuous. 

The price is that the bound \cref{e.estim.Z} has another factor $(\log r)^{\frac{1}{s}}$ than \cref{eq:Old} and this point is also conjectured in \cite[eq.(1.10)]{armstrong2018iterative}. This factor does not weaken our algorithm viewing the range of $\lambda$ and more detailed study will be discussed in \Cref{subsec:Heuristic}. In \cite[Section 4]{armstrong2018iterative}, one can find some examples for the practical choice of $\lambda$ and numerical experiences. The author has also repeated the algorithm in a domain $128 \times 128$ with $\lambda = 0.1$, for $\a$ of type chessboard with law Bernoulli$(\frac{1}{2})$ taking value in $\left\{\frac{1}{\sqrt{2}}, \sqrt{2}\right\}$, and it takes several seconds to obtain a precision $10^{-5}$ on a laptop. In \cite{gu2019efficient}, this algorithm is applied to the \emph{supercritical percolation} setting and one can find the numerical results in Section 6. Hopefully, this algorithm also works on other stochastic homogenization models with stationary ergodic random coefficient field.

\begin{remark}
One can also state the algorithm \cref{eq:iterative} and \Cref{thm:main} in $\epsilon$-scale for \cref{eq:mainEpsilon}: In fact, by a simple change of variable that $\epsilon = \frac{1}{r}$, and $u_{\epsilon}(\cdot) = u(\frac{\cdot}{\epsilon})$ for $u$ in \cref{eq:main}, then the same estimates \cref{eq:contraction} and \cref{e.estim.Z} hold in the domain $U$ for \cref{eq:mainEpsilon} with $0 < \epsilon \leq \frac{1}{2}$, when applying the following iteration with $\ec \in \left(\epsilon, 1\right)$ 
\begin{equation}
\label{eq:iterativeEpsilon}
\left\{
	\begin{array}{lll}
	\left(\left(\frac{\ec}{\epsilon}\right)^2 - \nabla \cdot \left(\a\left(\frac{\cdot}{\epsilon}\right) \nabla\right)\right)u_0 &= f + \nabla \cdot \left(\a\left(\frac{\cdot}{\epsilon}\right) \nabla\right) v  & \text{ in } U,\\
	-\nabla \cdot \ab \nabla \ub &= \left(\frac{\ec}{\epsilon}\right)^2 u_0  & \text{ in } U,\\
	\left(\left(\frac{\ec}{\epsilon}\right)^2 - \nabla \cdot \left(\a\left(\frac{\cdot}{\epsilon}\right) \nabla\right)\right) \ut &= \left(\left(\frac{\ec}{\epsilon}\right)^2 - \nabla \cdot \ab \nabla\right) \ub & \text{ in } U.\\
	\end{array}
\right.
\end{equation}
\end{remark}

\smallskip

There exists a large amount of references about the homogenization theory and how we apply them in numerical solution. For example, see \cite{bensoussan2011asymptotic,kozlov1979averaging, tartar2009general,jikov2012homogenization,
allaire1992homogenization, yurinskii1986averaging, naddaf1998estimates} for the classical homogenization theory and see \cite{BL,EGH,GGS,OZB,malpet,KY,roulette, knap1,knap2,EL1,EL2, HMM} for the multi-grid algorithm in homogenization problem. To analyze a numerical algorithm for stochastic homogenization problem, it requires quantitative description and it was open for long time.
Thanks to the recent progress in a series of works of Armstrong, Kuusi, Mourrat and Smart \cite{armstrong2016lipschitz, armstrong2016mesoscopic, armstrong2016quantitative, armstrong2017additive}, and also the works of Gloria, Neukamm and Otto \cite{gloria2011optimal,gloria2012optimal,
gloria2014optimal,gloria2014regularity,
gloria2015quantification, gloria2015corrector}, we get a further understanding in this direction; see also the \cite{armstrong2018quantitative} a monograph and \cite{JCMIntro} as a brief introduction. In both this work and \cite{armstrong2018iterative}, the analysis depends on two-scale expansion theorem, which is introduced in \cite{allaire1992homogenization} in periodic case and \cite[Theorem 2.2, Theorem 2.3]{AllaireAmar} gives its rate of convergence; the quantitative analysis for this problem with random coefficient is studied in \cite{gloria2014optimal} and  \cite[Chapter 6]{armstrong2018quantitative}. Finally, we remark that in all the context, we suppose that the effective conductance $\ab$ is known, because this part is now well understood and there exist many efficient methods to do it quickly, see for example \cite{gloria2012numerical, egloffe2014random, mourrat2016efficient,fischer2018choice, hannukainen2019computing}.

\smallskip
The rest of the paper is organised as follows: At the end of the introduction, we focus on the numerical part to study why this algorithm is more efficient in \Cref{subsec:Complex}, and explain heuristically how this algorithm converges to the solution in \Cref{subsec:Heuristic}. \Cref{sec:notation} introduces some notations and then we turn to the proof of \Cref{thm:main}. The main improvements compared to \cite{armstrong2018quantitative} are the two technical lemmas in \Cref{sec:improve}, then we put our technique in the proof of \cite{armstrong2018iterative}, which is a quantitative two-scale expansion theorem, and we reformulate  it in \Cref{sec:two-scale}. Finally, in \Cref{sec:iteration}, we combine all the results and obtain the main theorem.

\subsection{Complexity analysis in numeric}\label{subsec:Complex}
In this part we give a numerical consequence of \Cref{thm:main}. We start by recalling why solving for $(\lambda^2 - \nabla \cdot \a \nabla)$ is computationally less difficult than solving for $- \nabla \cdot \a \nabla$. In our context, after discretization, the elliptic equations is transformed to a symmetric linear system 
\begin{equation}\label{eq:System}
\A \u = \f,
\end{equation}
where $\A \in \mathbb{R}^{N \times N}$ is positive definite, $\u,\f \in \mathbb{R}^N$ and $N$ stands for the number of elements which is fixed during all this subsection. To capture all the information of coefficients, the minimal numerical resolution requires that $N = O(r^d)$. Then the problem becomes solving a large sparse linear system. 

One basic method for this problem is the \textit{conjugate gradient method} (CGM), whose rate of convergence is 
$$
\tau(\A) = \frac{\sqrt{\rho(\A)} -1}{\sqrt{\rho(\A)} + 1},
$$
where $\rho$ is the \textit{spectral condition number}  defined as
$$
\rho(\A) = \frac{\kappa_{max}(\A)}{\kappa_{min}(\A)},
$$
and $\kappa_{max}, \kappa_{min}$ stands for the maximum and minimum eigenvalues ( \cite[Theorem 6.29, eq.(6.128)]{sadd2003}). In practice, $\kappa_{max}(\A) \approx \text{constant}$ while $\kappa_{min}(\A) \approx r^{-2}$. Thus, when $r$ grows bigger, the ratio of convergence $\tau(\A) \approx 1-\frac{1}{r}$. It is still a geometric convergence but the rate is very small and to solve \cref{eq:System} with a resolution $\epsilon_0$, it requires $O\left( r \vert \log(\epsilon_0)\vert \right)$ rounds of CGM.

Now we focus on the complexity to solve \cref{eq:System} with \cref{eq:iterative}. Since in every iteration we solve two regularised equations and a homogenized equation, we investigate their complexity at first:
\begin{itemize}
\item  For the homogenized equation, since the matrix is constant, which alows us to apply the multi-grid algorithm and for a resolution $\epsilon_1$, the complexity is $O(\vert \log(\epsilon_1)\vert)$ rounds CGM \cite[Chapter 4]{multgridtutor}. 
\item For the regularised equation $(\lambda^2 \mathsf{Id} + \A)\u^{\lambda} = \f$, we use CGM and the spectral condition number for $\frac{1}{r} \ll \lambda \ll 1$ is  
$$
\rho(\lambda^2 \mathsf{Id} + \A) = \frac{\lambda^2 + \kappa_{max}(\A)}{\lambda^2 + \kappa_{min}(\A)} \approx \frac{C}{\lambda^2},
$$
and this operation also changes the typical size of the rate of convergence 
$$
\tau(\lambda^2\mathsf{Id} + \A) = \frac{\sqrt{\rho(\lambda^2\mathsf{Id} + \A)} -1}{\sqrt{\rho(\lambda^2\mathsf{Id} + \A)} + 1} \approx 1 - \frac{\lambda}{C}.
$$
Then for a resolution $\epsilon_1$, it requires $O(\lambda^{-1}\vert \log(\epsilon_1)\vert)$.
\end{itemize} 
When we implement the algorithm \cref{eq:iterative}, generally speaking, the \cref{e.estim.Z} tells us with a large probability, after every iteration the precision will be multiplied $\lambda^{\frac{1}{2}}$. Thus, totally it demands $O\left(\vert \log(\lambda) \vert^{-1} \vert \log(\epsilon_0) \vert\right)$ iterations. Moreover, in the k-th iteration, it suffices to obtain a resolution $\epsilon_1 = \lambda^{\frac{k}{2}}$ for the regularised equation and the homogenized equation, so the complexity is 
$$
\sum_{k=1}^{O\left(\vert \log(\lambda) \vert^{-1} \vert \log(\epsilon_0) \vert\right)} \lambda^{-1}\vert \log(\lambda^{\frac{k}{2}})\vert \approx \vert \log(\lambda) \vert^{-1} \lambda^{-1}\log^{2}(\epsilon_0).
$$
When we take a typical choice of $\lambda = (\log r)^{-1}$, the complexity of the iterative algorithm is $O(\log r \vert \log \epsilon_0 \vert^2)$ rounds of CGM. This quanity is much smaller than the complexity of solving \cref{eq:System} directly for a big $r$, provided that the precision is reasonable, for example $\epsilon_0 = r^{-n}$ for $n$ fixed.

\subsection{Heuristic analysis of algorithm}
\label{subsec:Heuristic}
\begin{figure}[!h]
\centering
\includegraphics[scale=0.60]{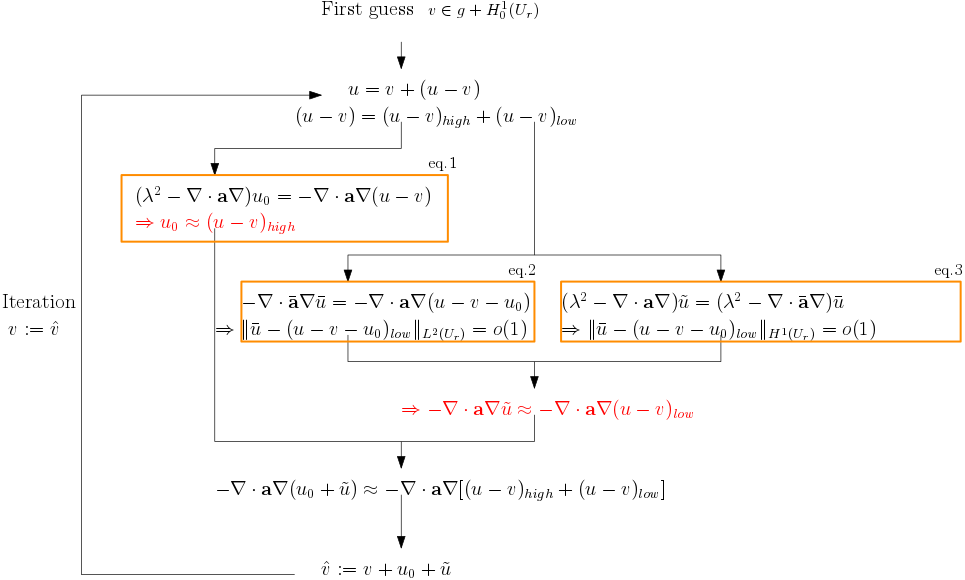}
\caption{A flowchart shows the mechanic of the algorithm.}
\end{figure}
In this part, we explain heuristicly why the iterative algorthm converges to the true solution and how we figure out this algorithm. We keep in mind the main idea: To solve \cref{eq:main}, we divide it into sub-problems of regularized equations and homogenized equations.

We start with an initial guess of the solution $v$, then we write $u = v + (u-v)$ and we want to recover the part $(u-v)$. Since the divergence form is linear, we have
$$
-\nabla \cdot \a \nabla (u-v) = -\nabla \cdot \a \nabla u + \nabla \cdot \a \nabla v = f +\nabla \cdot \a \nabla v.
$$
In the first step of our algorithm, we solve the problem with regularization 
$$
(\lambda^2 -\nabla \cdot \a \nabla) u_0 = f +\nabla \cdot \a \nabla v,
$$
and $u_0$ gives the high frequency part of $(u-v)$ associated to the operator $-\nabla \cdot \a \nabla$ on $\Ur$. To see it, we apply the theorem of spectral decomposition \cite[Chapter 5]{simon1980methods} 
$$
(u - v) = \sum_{i}^{\infty} \psi_i,
$$
where $\psi_i$ is the projection on the subspace of the eigenvalue $\kappa_i$ associated to $-\nabla \cdot \a \nabla$ on $\Ur$,  with $0 < \kappa_1 < \kappa_2 \cdots$. Then $u_0$ has an expression
$$
u_0 = \sum_{i=1}^{\infty} \frac{\kappa_i}{\lambda^2 + \kappa_i} \psi_i.
$$ 
We see the projections associated to large eigenvalues have a small perturbation after the regularization. Thus, we consider the solution $u_0$ of the high frequency projection of $(u - v)$. Informally, we write
\begin{eqnarray*}
(u-v) &\approx & (u-v)_{high} + (u-v)_{low}, \\
u_0 &=& (u-v)_{high}.
\end{eqnarray*}
Therefore, after the first step, we do not get all the information of $(u-v)$ but $(u-v)_{high}$ and the second and the third equation serve to recover $(u-v)_{low}$ of $(u-v)$. Using the superposition, a direct idea is to solve 
\begin{equation}\label{eq:pseudo1}
-\nabla \cdot \a \nabla \ut \approx -\nabla \cdot \a \nabla (u-v-u_0) = \lambda^2 u_0.  
\end{equation}
But for the reason of less numerical cost, we choose to solve at first a homogenized problem
\begin{equation}
\label{eq:Rigorous1}
-\nabla \cdot \ab \nabla \ub = -\nabla \cdot \a \nabla (u-v-u_0) = \lambda^2 u_0,
\end{equation}
and if we believe that $-\nabla \cdot \a \nabla \ut \approx -\nabla \cdot \ab \nabla \ub $, we can also solve the one by adding regularization 
\begin{equation}
\label{eq:Rigorous2}
(\lambda^2 - \nabla \cdot \a \nabla) \ut  = (\lambda^2 - \nabla \cdot \ab \nabla) \ub. 
\end{equation}
Then, we hope that this $\ut$ gives us $(u-v)_{low}$. Perhaps this "$\approx$" is not very precise, so we do $\hat{v} := v + u_0 + \ut$ and put $\hat{v}$ in the role of $v$ for several iterations in order to get a further approach to the solution of \cref{eq:main}.

\section{Notation}\label{sec:notation}
In this section, we state our assumptions about the coefficient field precisely and introduce some notations.
\subsection{Assumptions on the coefficient field}\label{subsec:Field}
We denote by $\left((\a(x))_{x \in \Rd}, \F, \Prb\right)$ the probability space, and by $\F_{V}$ the $\sigma$-algebra generated by 
$$
\a \mapsto \int_{\Rd} \chi \a_{i,j}, \text{ where } i,j \in \{1,2,3 \cdots d\}, \chi \in C^{\infty}_c(V).
$$
$\F$ is short for $\F_{\Rd}$. $T_y$ denotes the operator of translation i.e. for any function $f$, $T_y(f)(x) := f(x+y)$  and for any set $A$, $T_y(A) := \{x+y \vert x \in A\}$.

The precise assumptions for the coefficient field are as follows.
\begin{enumerate}
\item $\Zd$-\textit{stationarity}: For each $A \in \F$ and each $y \in \Zd$, we have $\Prb[T_y(A)] = \Prb[A]$.
\item \textit{Unit range correlation}: 
$$\forall W, V  \in \mathcal{B}(\Rd),\quad  d_{H}(W,V) > 1 \implies \F_W, \F_V \text{ are independent. }$$ 
Here $d_H$ is the Hausdorff distance in $\Rd$.
\item \textit{Uniform ellipticity}: 
There exists $0 < \Lambda < \infty$ such that with probability one and for every $x,\xi \in \Rd$, we have
$\Lambda^{-1}|\xi|^2 \leq \xi \cdot \a(x) \xi \leq \Lambda |\xi|^2$.
\end{enumerate}

\subsection{Notation $\GO_s$}
We recall the definition of $\GO_s$
\begin{equation}
X \leq \GO_s(\theta) \iff \Expt\left[\exp((\theta^{-1}X)_{+}^s)\right] \leq 2,
\end{equation} 
where $(\theta^{-1}X)_+$ means $\max\{\theta^{-1}X, 0\}$. This notation gives the tail estimate of random variables: One can use Markov's inequality to obtain that 
$$
X \leq \GO_s(\theta) \Longrightarrow \forall x > 0, \Prb[X \geq \theta x] \leq 2 \exp(-x^s).
$$
Moreover, we can also obtain the estimate of the sum of a series of random variables without its joint distribution: For a measure space $(E, \mathcal{S}, m)$ and $\{X(z)\}_{z \in E}$ a family of random variables, we have 
\begin{equation}
\label{eq:OSum}
\forall z \in \GO_s(E), X(z )\leq \GO_s(\theta(z)) \Longrightarrow \int_{E} X(z) m(dz) \leq \GO_s\left( C_s \int_{E} \theta(z) m(dz) \right),
\end{equation}
where $0 < C_s < \infty$ is a constant and $C_s = 1$ for $s \geq 1$. See Appendix of \cite[Appendix A]{armstrong2018quantitative} for proofs and other operations on $\GO_s$.

\subsection{Convolution}
For $f \in L^p(\Rd), g \in L^q(\Rd)$ where $\frac{1}{p} + \frac{1}{q} = 1$, we denote by $f \star g$ the convolution of the function $f, g$ 
$$f \star g (x) = \int_{\Rd} f(y)g(x-y) \, dy.$$
In this article, two mollifiers used are the heat kernel $\Phi_r(x)$, defined for $r > 0$ and $x \in \Rd$ by
$$
\Phi_r(x) := \frac{1}{(4\pi r^2)^{d/2}}\exp\left(-\frac{x^2}{4r^2}\right),
$$
and the bump function $\zeta \in C^{\infty}_c(\Rd)$
$$
\zeta(x) := c_d \exp\left(-\left(\frac{1}{4}-|x|^2\right)^{-1}\right) \Ind_{\{|x|<\frac{1}{2}\}},
$$
where $c_d$ is the constant of normalization such that $\int_{\Rd} \zeta(x) dx = 1$. Finally, we use the notation 
$$
\zeta_{\epsilon}(x) = \frac{1}{\epsilon^{d}}\zeta\left( \frac{x}{\epsilon}\right),
$$
as a mollifier in scale $\epsilon > 0$ and we have $\supp(\zeta_{\epsilon}) \subset \overline{B_{\epsilon/2}}$.

\subsection{Function spaces}
We use $\{e_1, e_2, \cdots e_d\}$ as the canonical basis of $\Rd$. For every Borel set $U \subset \Rd$, let $\vert U \vert$ be its Lebesgue measure. For each $p \in [1, +\infty]$, we denote by $L^p(U)$ the classical Lebesgue space and $L^{p}_{loc}(\Rd)$ the function space for the functions with finite $L^{p}(V)$ norm for any compact set $V$. The weighted norm $\aL^p(U)$ is defined for a bounded Borel set $U$ as
$$
\Vert f \Vert_{\aL^p(U)} = \left( \frac{1}{|U|} \int_U |f(x)|^p  \, dx \right)^{\frac{1}{p}} = |U|^{-\frac{1}{p}}\Vert f \Vert_{L^p(U)}.
$$
For each $k \in \mathbb{N}$, we denote by $H^k(U)$ the classical Sobolev space on $U$ equipped with the norm
$$
\Vert f \Vert_{H^k(U)} := \sum_{0 \leq |\beta| \leq k} \Vert \partial^{\beta} f \Vert_{L^2(U)},
$$
where $\beta \in \mathbb{N}^d$ represents a multi-index weak derivative, 
$$
|\beta| := \sum_{i=1}^d \beta_i \quad \text{  and  } \quad \partial^{\beta} f = \p{1}^{\beta_1} \cdots \p{d}^{\beta_d}f.
$$
We also use $\vert\nabla^k f \vert$ to indicate $\sum_{|\beta| = k} \vert\partial^{\beta} f\vert$. When $|U| < \infty$, we define the weighted norm that 
$$
\Vert f \Vert_{\aH^k(U)} := \sum_{0 \leq |\beta| \leq k} |U|^{\frac{|\beta|-k}{d}} \Vert \partial^{\beta} f \Vert_{\aL^2(U)}.
$$
$H^k_0(U)$ denotes the closure of $C_c^{\infty}(U)$ in $H^k(U)$ and $H^{-1}(U)$ for the dual of $H^1(U)$. The weighted norm $\aH^{-1}(U)$ is
$$
\Vert f \Vert_{\aH^{-1}(U)} := \sup \left\{ \frac{1}{|U|}\int_{U} f(x)g(x) \,dx , g \in H^{1}_0(U), \Vert g \Vert_{\aH^1(U)} \leq 1 \right\}.
$$
Here, we abuse the use of the integration $\int_{U} f(x)g(x) \,dx $, since the function space $H^{-1}(U)$ also contains linear functional, which is not necessarily a function. 

Finally, we remark that one advantage of the definition of $\aH^k$ is that it is consistent with the scale constant of Poincar\'e's inequality \cite[eq.(7.44)]{trudinger} and Sobolev extension theorem \cite[Theorem 7.25]{trudinger}. That is, under the condition that the Borel set $U$ has $C^{1,1}$ boundary, for any function $f \in H^1_0(\Ur)$ 
\begin{equation}\label{eq:Poincare}
\Vert f \Vert_{\aH^1(\Ur)} \leq C(U,d)\Vert \nabla f \Vert_{\aL^2(\Ur)},
\end{equation}
and for any $f \in H^2(\Ur)$, there exists an extension $\Ext(f) \in H_0^2(\Rd)$ such that $\Ext(f) \equiv f$ in $\Ur$
\begin{equation}\label{eq:Extension}
\sum_{0 \leq |\beta| \leq 2} |\Ur|^{- \frac{d}{2} - \frac{|\beta|-2}{d}} \Vert \partial^{\beta} \Ext(f) \Vert_{L^2(\Rd)} \leq C(U,d) \Vert f \Vert_{\aH^2(\Ur)}.
\end{equation}
The proof depends on the scaling argument: For \cref{eq:Poincare}, we prove at first the result in domain $U$ and then apply to $x \mapsto f(rx)$. For \cref{eq:Extension}, we apply \cite[Theorem 7.25]{trudinger} to the domain $U$ and obtain an extension $\Ext_U$ satisfying \cref{eq:Extension} for $r = 1$. Then, for the extension $\Ext_{\Ur}$ on the domain $\Ur$, we define $\Ext_{\Ur}(f)(x) := \Ext_{U}(f(r \cdot))(x/r)$ and this satisfies \cref{eq:Extension} with a constant depending only on $U, d$. In the following paragraphs we neglect the index of domain and still note the extension $\Ext$.

\subsection{Cubes}
We use the notation $\cu$ to refer the open unit cube 
$\cu := \left(-\frac{1}{2}, \frac{1}{2} \right)^d$. For any $z \in \Rd$, the translation of $\cu$ in the direction $z$ writes $z + \cu := z + \left(-\frac{1}{2}, \frac{1}{2} \right)^d$.
The sum of a cube and a Borel set $U$ is defined as 
$$
\cu + U := \left\{z \in \Rd | z = x + y, x \in \cu, y \in U \right\}.
$$
We also denote define scaling of the cube by size $\epsilon > 0$ that $\cu_{\epsilon} := \left(-\frac{\epsilon}{2}, \frac{\epsilon}{2} \right)^d.$

\section{Two technical lemmas}
\label{sec:improve}
We prove two useful lemmas that improves the estimate of the iterative algorithm in this section. A formula similar to \Cref{lem:MixedNorm} can be found in \cite[Lemme 6.7]{armstrong2018quantitative}. Here we introduce a variant version and it works well together with \Cref{lem:max}.

\subsection{An inequality of localization}
\begin{lemma}[Mixed norm]
\label{lem:MixedNorm}
There exists a constant $0 < C(d) < \infty$ such that for every $f \in L_{loc}^2(\Rd)$, $g \in L^2(\Rd)$ and every $\epsilon > 0, r \geq 2$, we have the following inequality
\begin{equation}\label{eq:MixedNorm}
\Vert f (g \star \zeta_{\epsilon})\Vert_{L^2(\Ur)} \leq C(d)\left( \max_{z \in \epsilon \Zd \cap (\Ur + \cu_{\epsilon})}\Vert f \Vert_{\aL^2(z + \cu_{\epsilon})} \right) \Vert g \Vert_{L^2(\Ur + \cu_{3\epsilon})}.
\end{equation}
\end{lemma}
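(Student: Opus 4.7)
The plan is to exploit the locality of $\zeta_{\epsilon}$, which is supported in $\overline{B_{\epsilon/2}}$: the value $(g \star \zeta_{\epsilon})(x)$ depends on $g$ only within distance $\epsilon/2$ of $x$. I will partition $\Ur$ into cubes of side $\epsilon$ indexed by $z \in \epsilon\Zd \cap (\Ur + \cu_{\epsilon})$, bound $\Vert g \star \zeta_{\epsilon} \Vert_{L^{\infty}(z+\cu_{\epsilon})}$ pointwise by a local $L^2$ norm of $g$ via Cauchy--Schwarz, and then sum with bounded overlap.

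First, on each such cube: for $x \in z + \cu_{\epsilon}$, the integrand in $(g \star \zeta_{\epsilon})(x) = \int g(y)\,\zeta_{\epsilon}(x-y)\,dy$ vanishes outside $x + \overline{B_{\epsilon/2}} \subset z + \cu_{2\epsilon}$. Cauchy--Schwarz together with the scaling identity $\Vert \zeta_{\epsilon} \Vert_{L^2(\Rd)}^2 = \epsilon^{-d}\Vert \zeta \Vert_{L^2(\Rd)}^2$ will yield
\[
\Vert g \star \zeta_{\epsilon} \Vert_{L^{\infty}(z + \cu_{\epsilon})}^2 \le C(d)\,\epsilon^{-d}\int_{z + \cu_{2\epsilon}} |g|^2,
\]
so that, using $\Vert f \Vert_{L^2(z+\cu_{\epsilon})}^2 = \epsilon^d\,\Vert f \Vert_{\aL^2(z+\cu_{\epsilon})}^2$,
\[
\int_{z+\cu_{\epsilon}} |f|^2 \,|g \star \zeta_{\epsilon}|^2 \le C(d)\,\Vert f \Vert_{\aL^2(z+\cu_{\epsilon})}^2 \int_{z+\cu_{2\epsilon}} |g|^2.
\]

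Next I will sum over $z \in \epsilon\Zd \cap (\Ur + \cu_{\epsilon})$, factoring out the maximum of $\Vert f \Vert_{\aL^2(z+\cu_{\epsilon})}^2$. The essential combinatorial facts are that the family $\{z + \cu_{2\epsilon}\}_{z \in \epsilon\Zd}$ covers $\Rd$ with constant multiplicity $2^d$, and that every $z$ in the index set satisfies $z + \cu_{2\epsilon} \subset \Ur + \cu_{3\epsilon}$ (since $z$ lies within $\cu_{\epsilon}$ of $\Ur$). These give
\[
\sum_{z} \int_{z + \cu_{2\epsilon}} |g|^2 \le 2^d \int_{\Ur + \cu_{3\epsilon}} |g|^2.
\]
Dividing by $|\Ur|$ and rewriting the right-hand side in normalized form yields the desired estimate, up to the prefactor $|\Ur + \cu_{3\epsilon}|/|\Ur|$, which is $O(1)$ because $U$ is a bounded $C^{1,1}$ domain and $r \ge 2$.

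The analytic core is just one Cauchy--Schwarz applied locally, so the main obstacle is bookkeeping: checking the inclusion $z + \cu_{2\epsilon} \subset \Ur + \cu_{3\epsilon}$ for all admissible $z$, the multiplicity count for the enlarged cubes, and the boundedness of the boundary ratio $|\Ur + \cu_{3\epsilon}|/|\Ur|$. None of these should present serious difficulty, but the careful choice of the enlargement $\cu_{3\epsilon}$ on the right-hand side is dictated precisely by this geometric step.
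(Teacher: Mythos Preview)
Your argument is essentially identical to the paper's: the same decomposition into $\epsilon$-cubes indexed by $\epsilon\Zd \cap (\Ur+\cu_\epsilon)$, the same Cauchy--Schwarz bound $\Vert g\star\zeta_\epsilon\Vert_{L^\infty(z+\cu_\epsilon)} \le C\epsilon^{-d/2}\Vert g\Vert_{L^2(z+\cu_{2\epsilon})}$ using $\supp\zeta_\epsilon\subset\overline{B_{\epsilon/2}}$, and the same bounded-overlap summation landing in $\Ur+\cu_{3\epsilon}$. You are in fact slightly more explicit than the paper about the multiplicity $2^d$ and the volume ratio $|\Ur+\cu_{3\epsilon}|/|\Ur|$, which the paper simply absorbs into $C(d)$ without comment.
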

\begin{proof}

\begin{figure}[h]
\centering
\includegraphics[scale=0.4]{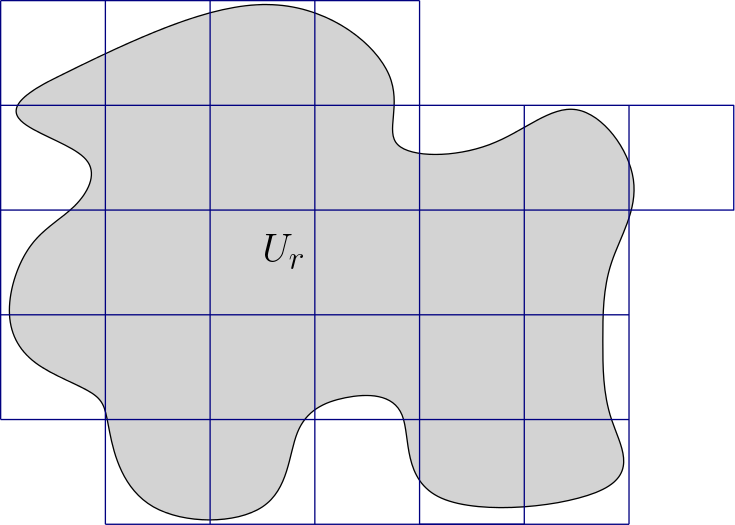}
\caption{We calculate the $L^2$ norm by the sum of all norm in small cubes of size $\epsilon$, so we counts all cubes in the domain $(\Ur + \cu_{\epsilon})$.}
\end{figure}
We decompose the $L^2$ norm as the sum of that in small cubes $\cu_{\epsilon}$
\begin{eqnarray*}
\Vert f (g \star \zeta_{\epsilon})\Vert^2_{L^2(\Ur)} &\leq & \sum_{z \in \epsilon \Zd \cap (\Ur + \cu_{\epsilon})} \Vert f (g \star \zeta_{\epsilon})\Vert^2_{L^2(z + \cu_{\epsilon})}  \\
(\text{H\"older's inequality}) &\leq & \sum_{z \in \epsilon \Zd \cap (\Ur + \cu_{\epsilon})} \left( \Vert f \Vert^2_{L^2(z + \cu_{\epsilon})}\Vert  g \star \zeta_{\epsilon}\Vert^2_{L^{\infty}(z + \cu_{\epsilon})} \right) \\
&\leq &  \left( \max_{z \in \epsilon \Zd \cap (\Ur+\cu_{\epsilon})}\Vert f \Vert^2_{L^2(z + \cu_{\epsilon})} \right) \left( \sum_{z \in \epsilon \Zd \cap (\Ur + \cu_{\epsilon})} \Vert  g \star \zeta_{\epsilon}\Vert^2_{L^{\infty}(z + \cu_{\epsilon})} \right).
\end{eqnarray*}
Noticing that for any $x \in z + \cu_{\epsilon}, y \in B_{\epsilon/2}$ then $(x - y) \in z + \cu_{2\epsilon}$ and we have
\begin{eqnarray*}
|g \star \zeta_{\epsilon}(x)| &=& \left| \int_{\cu_{\epsilon}} g(x -y) \frac{1}{\epsilon^d} \zeta(\frac{y}{\epsilon}) \, dy \right| \\
&\leq & \frac{C(d)}{\epsilon^d} \int_{z + \cu_{2\epsilon}} |g(y)| \, dy \\
&\leq & \frac{C(d)}{\epsilon^d} \left( \int_{z + \cu_{2\epsilon}} |g(y)|^2 \, dy \right)^{\frac{1}{2}} | \cu_{2\epsilon}|^{\frac{1}{2}}\\
&\leq & \frac{C(d)}{\epsilon^{\frac{d}{2}}}  \Vert g \Vert_{L^2(z + \cu_{2\epsilon})}. 
\end{eqnarray*}
So we get 
$$\Vert  g \star \zeta_{\epsilon}\Vert_{L^{\infty}(z + \cu_{\epsilon})} \leq  \frac{C(d)}{\epsilon^{\frac{d}{2}}} \Vert g \Vert_{L^2(z + \cu_{2\epsilon})},$$
and we add this analysis in the former inequality and obtain that 
\begin{eqnarray*}
\Vert f (g \star \zeta_{\epsilon})\Vert^2_{L^2(\Ur)} &\leq & C(d) \left( \frac{1}{\epsilon^d}\max_{z \in \epsilon \Zd \cap (\Ur + \cu_{\epsilon})}\Vert f \Vert^2_{L^2(z + \cu_{\epsilon})} \right) \left( \sum_{z \in \epsilon \Zd \cap (\Ur + \cu_{\epsilon})} \Vert g \Vert^2_{L^2(z + \cu_{2\epsilon})}\right) \\
&\leq & C(d)\left( \max_{z \in \epsilon \Zd \cap (\Ur + \cu_{\epsilon})}\Vert f \Vert^2_{\aL^2(z + \cu_{\epsilon})} \right) \Vert g \Vert^2_{L^2(\Ur+\cu_{3\epsilon})}.
\end{eqnarray*}
This is the desired inequality.
\end{proof}

\subsection{Maximum of finite number of random variables of type $\GO_s(1)$}
Since $\left( \max_{z \in \epsilon \Zd \cap (\Ur + \cu_{\epsilon})}\Vert f \Vert_{\aL^2(z + \cu_{\epsilon})} \right)$ often appears in the context as the maximum of a family of random variables, we prove the following lemma to analyze the maximum of a finite number of random variables of type $\GO_s(1)$. Note that we do not make any assumptions on the joint law of the random variables.

\begin{lemma}
\label{lem:max}
For all $N \geq 1$ and a family of random variables $\{X_i\}_{1\leq i \leq N}$ satisfying that $X_i \leq \GO_s(1)$, we have 
\begin{equation}
\label{eq:max}
\left( \max_{1 \leq i \leq N} X_i \right) \leq \GO_s\left( \left(\frac{\log(2N)}{\log(4/3)}\right)^{\frac{1}{s}}\right).
\end{equation}
\end{lemma}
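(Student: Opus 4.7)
The plan is to convert the single-variable assumption $X_i \leq \GO_s(1)$ into a tail bound, union-bound over the $N$ variables, and then integrate the tail back up into the Orlicz-type moment $\Expt[\exp(\theta^{-s}(\max_i X_i)_+^s)]$ that defines $\GO_s(\theta)$. The key point is that no assumption on the joint law is available, so only the marginal tails together with a crude union bound are in play, and the gain must come entirely from the tuning of $\theta$.

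First, Markov's inequality applied to the nonnegative random variable $\exp((X_i)_+^s)$ yields the tail estimate $\Prb[X_i \geq \lambda] \leq 2 e^{-\lambda^s}$ for every $\lambda \geq 0$, hence, with $M := \max_{1 \leq i \leq N} X_i$,
\begin{equation*}
\Prb[M \geq \lambda] \leq \min(1,\, 2N e^{-\lambda^s}).
\end{equation*}
Next, the layer-cake identity $\Expt[e^Z] = 1 + \int_0^\infty \Prb[Z > u]\, e^u\, du$ applied to $Z = \theta^{-s} M_+^s$ gives
\begin{equation*}
\Expt[\exp(\theta^{-s} M_+^s)] \leq 1 + \int_0^\infty \min(1,\, 2N e^{-\theta^s u})\, e^u\, du,
\end{equation*}
and I split the integral at the crossover $u^* := \log(2N)/\theta^s$, using the bound $1$ for $u \leq u^*$ and $2N e^{-\theta^s u}$ for $u > u^*$. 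Both pieces are elementary exponential integrals; assuming $\theta^s > 1$ they combine, after cancellation, to
\begin{equation*}
\Expt[\exp(\theta^{-s} M_+^s)] \leq (2N)^{1/\theta^s} \cdot \frac{\theta^s}{\theta^s - 1}.
\end{equation*}

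Finally, I make the specific choice $\theta^s = \log(2N)/\log(3/2)$, which is calibrated precisely so that $(2N)^{1/\theta^s} = 3/2$. The bound then reduces to $(3/2)\cdot \theta^s/(\theta^s - 1)$, which is at most $2$ exactly when $\theta^s \geq 4$, and this in turn is equivalent to $\log(2N) \geq 4\log(3/2)$, hence to $N \geq 3$. For the two edge cases: if $N = 1$ the assertion reduces to $X_1 \leq \GO_s((\log 2/\log(3/2))^{1/s})$, which follows from $X_1 \leq \GO_s(1)$ and the obvious monotonicity of $\GO_s(\cdot)$ since $\log 2/\log(3/2) > 1$; if $N = 2$ one sharpens the union bound via the identity $\max(x,y) = x + y - \min(x,y)$, noting that $\min(\exp(Z_1),\exp(Z_2)) \geq 1$, so that by Jensen's inequality applied to each $\Expt[\exp(\theta^{-s}(X_{i})_+^s)] \leq 2^{1/\theta^s}$ one gets $\Expt[\exp(\theta^{-s}M_+^s)] \leq 2\cdot 2^{1/\theta^s} - 1$, which is below $2$ for the stated value of $\theta$.

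The main obstacle is the constant bookkeeping at the end: the constant $\log(3/2)$ is forced by the twin requirements that $(2N)^{1/\theta^s}$ be a tractable fixed number and that the resulting closed-form upper bound be $\leq 2$ for large $N$. The rest of the argument is routine, and crucially uses nothing about the dependence structure of the $X_i$ beyond the marginal control.
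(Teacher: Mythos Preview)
Your proof is correct and follows the same approach as the paper: Markov's inequality gives the marginal tail bound, a union bound controls $\Prb[M>\lambda]$, and the layer-cake representation turns this back into the $\GO_s$ moment, with the specific choice $\theta^s=\log(2N)/\log(3/2)$ making $(2N)^{1/\theta^s}=3/2$. The only cosmetic difference is that the paper computes the integral in the $x$-variable and, using $a^s>2$ rather than your sharper factor $\theta^s/(\theta^s-1)$, does not need to treat $N=2$ separately; your extra $N=2$ argument via $\max(e^{Z_1},e^{Z_2})\le e^{Z_1}+e^{Z_2}-1$ and Jensen is a clean way to close that small gap.
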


\begin{proof}
For the case $N=1$, we could check that \cref{eq:max} is established since 
$$
M = X_1 \leq \GO_s(1) \Longrightarrow M \leq \GO_s\left(\frac{\log(4)}{\log(4/3)}\right),
$$
so we focus on the case $N \geq 2$. By Markov's inequality, $X_i \leq \GO_s(1)$ gives us $\Prb[X_i > x] \leq 2 e^{-x^s}.$ Then we use the union bound to get 
\begin{equation}\label{eq:BoundProb}
\Prb\left[\max_{1 \leq i \leq N} X_i > x\right] = \Prb\left[\bigcup_{i=1}^N \{X_i > x\}\right] \leq  \left( 1 \wedge \sum_{i=1}^N \Prb[X_i > x]\right) \leq 1 \wedge 2N e^{-x^s}.
\end{equation}
We denote by $x_0$ the critical point such that $e^{x_0^s} = 2N$ and $a > 0$ such that $a^s > 3$ and its value will be chosen carefully later. We also set $M = \max_{1 \leq i \leq N} X_i $ and use Fubini's theorem: For an increasing positive function $g \in C^{1}(\mathbb{R})$ such that $g(0) = 0$, we have 
\begin{equation}\label{eq:Fubini}
\Expt\left[g(M)\right] = \int_{0}^{\infty} g(t)\Prb[M > t] \, dt.
\end{equation}  
We apply \cref{eq:Fubini} the function $x \mapsto \exp\left(\left(\frac{x}{a}\right)_{+}^s\right) - 1$, 
\begin{equation}\label{eq:maxDecom}
\begin{split}
\Expt\left[ \exp\left(\left(\frac{M}{a}\right)_{+}^s\right)\right] &= \int_0^{\infty} \frac{s}{a} \left(\frac{x}{a}\right)^{s-1} e^{(\frac{x}{a})^s}\Prb[M > x] \, dx  + 1 \\
&= \int_0^{x_0} \frac{s}{a} \left(\frac{x}{a}\right)^{s-1} e^{(\frac{x}{a})^s} \Prb[M > x] \, dx \\ 
 & \quad +  \int_{x_0}^{\infty} \frac{s}{a} \left(\frac{x}{a}\right)^{s-1} e^{(\frac{x}{a})^s} \Prb[M > x] \, dx + 1.
\end{split}
\end{equation}
For the integration on the interval $(0, x_0]$, we use the estimate \cref{eq:BoundProb} that $\Prb[M > x] \leq 1$ and $e^{x_0^s} = 2N$ to bound it
\begin{equation}\label{eq:maxPart1}
\begin{split}
\int_0^{x_0} \frac{s}{a} \left(\frac{x}{a}\right)^{s-1} e^{(\frac{x}{a})^s} \Prb[M > x] \, dx &\leq   \int_0^{x_0} \frac{s}{a} \left(\frac{x}{a}\right)^{s-1} e^{(\frac{x}{a})^s} \, dx \\
&=  \int_0^{x_0}  e^{(\frac{x}{a})^s} \, d \left(\frac{x}{a}\right)^s \\
&= (2N)^{\frac{1}{a^s}} - 1. 
\end{split}
\end{equation}
Similarly, for the integration on the interval $(x_0, \infty)$, we use \cref{eq:BoundProb} to control the probability that $\Prb[M > x] \leq 2N e^{-x^s}$ and give an estimate
\begin{equation}\label{eq:maxPart2}
\begin{split}
\int_{x_0}^{\infty} \frac{s}{a} \left(\frac{x}{a}\right)^{s-1} e^{(\frac{x}{a})^s} \Prb[M > x] \, dx &\leq 
2N\int_{x_0}^{\infty} \frac{s}{a} \left(\frac{x}{a}\right)^{s-1} e^{(\frac{x}{a})^s-x^s} \, dx \\
&=  2N \int_{x_0}^{\infty}  e^{(\frac{x}{a})^s-x^s}  \, d \left(\frac{x}{a}\right)^s \\
&=  \underbrace{\frac{1}{a^s -1}}_{\text{Using }a^s \geq 3}(2N)^{\frac{1}{a^s}} \\
&\leq \frac{1}{2}(2N)^{\frac{1}{a^s}}.\\
\end{split}
\end{equation}
Now we fix $a = \left(\frac{\log(2N)}{\log(4/3)}\right)^{\frac{1}{s}}$. For the case $N \geq 2$, we can check that
$$
a^s = \left(\frac{\log(2N)}{\log(4/3)}\right) \geq \left(\frac{\log(4)}{\log(4/3)}\right) > 3,
$$ 
so $a^s \geq 3$ is satisfied. Finally, we put back the estimate \cref{eq:maxPart1} and \cref{eq:maxPart2} back to \cref{eq:maxDecom} and verify the definition of $\GO_s$
$$
\Expt\left[ \exp\left(\left(\frac{M}{a}\right)_{+}^s\right)\right] \leq \frac{3}{2}(2N)^{\frac{1}{a^s}} = \frac{3}{2} e^{\log(4/3)} = 2.
$$
This finishes the proof.
\end{proof}

\section{Two-scale expansion estimate}\label{sec:two-scale}
This section reformulates \cite[Theorem 3.1]{armstrong2018iterative} a quantitative two-scale expansion theorem with the improvements from the lemmas in \Cref{sec:improve}.
\subsection{Main structure}
The two-scale expansion allows us to approximate the solution of \cref{eq:main} by the solution of the homogenized equation and the \textit{first order corrector} $\{\phi_{e_k}\}_{1 \leq k \leq d}$. We recall at first the definition of the first order corrector.
\begin{definition}[First order corrector, Lemma 3.16 and    Theorem 4.1 of \cite{armstrong2018quantitative}]\label{def:Corrector}
For each $p \in \Rd$, the corrector $\phi_p$ is the sublinear function satisfying that $p \cdot x + \phi_p$ is $\a$-harmonic in whole space $\Rd$ i.e.
\begin{eqnarray}
- \nabla \cdot \a(p + \nabla \phi_p) = 0 , \text{ in } \Rd.
\end{eqnarray}
$\nabla \phi_p$ is $\Zd$-stationary and $\phi_p$ is well-defined up to a constant. For $d \geq 3$, we can choose a constant such that $\Expt\left[\int_{\cu} \phi_p \right] = 0$, and in this case $\phi_p$ is also $\Zd$-stationary.
\end{definition} 
 
We remark that proof of the property $\phi_{p}$ is $\Zd$-stationary for $d \geq 3$ requires a detailed quantitative study of the following modified corrector
\begin{align}\label{eq:CorrectorModified}
\phim_{p} := \phi_p - \phi_p \star \Phi_{\ec^{-1}},
\end{align}
which is $\Zd$-stationary and is always well-defined. Then the quantitative study of $\phim_{p}$ allows us to extract a limit in the subsequence when $\ec \rightarrow 0$ and this gives us a candidate for the choice of constant. For its complete proof, see \cite[Chapter 4.6, Page 175]{armstrong2018quantitative}.  
In the following paragraphs, we specify the constant by $\Expt\left[\int_{\cu} \phi_p \right] = 0$. Therefore, we can use the property that $\phi_{p}$ is $\Zd$-stationary for $d \geq 3$.

Once we define the first-order corrctor, as well-known from \cite{allaire1992homogenization}, for $v$ the heterogeneous and $\vb$ the homogenized solution, the two-scale expansion $\vb + \sum_{k=1}^d (\p{k} \vb)  \phi_{e_k}$ approximates $v$ in the sense $H^1$. We follow the similar idea in \cite[Chapter 6]{armstrong2018quantitative} and apply a modified two-scale expansion to $\vb \in H^1_0(\Ur) \cap H^2(\Ur)$ with $\{\phim_{e_k}\}_{1 \leq k \leq d}$ defind in \cref{eq:CorrectorModified}
\begin{align}\label{eq:defTwoScale}
w := \vb +  \sum_{k=1}^d \p{k} (\Ext(\vb) \star \zeta) \phim_{e_k},
\end{align}
where $\Ext(\vb)$ is the Sobolev extension such that \cref{eq:Extension} is established. In the following we abuse a little the notation to identify 
\begin{align}\label{eq:ExtCov}
\vb \star \zeta := \Ext(\vb) \star \zeta,
\end{align}
in order to shorter the equation. We want to prove a $H^1$ convergence theorem for the operator $(\mu^2 - \nabla \cdot \a \nabla)$. We also recall the definition of $\ell(\lambda)$
\begin{equation*}
\ell(\lambda) = \left\{
	\begin{array}{ll}
	(\log (1+\lambda^{-1}))^{\frac{1}{2}} & d = 2, \\	
	1 & d > 2.
	\end{array}
\right.
\end{equation*}

\begin{theorem}[Two-scale estimate]

\label{thm:TwoScale}
For every $r\geq 2, \lambda \in (\frac{1}{r}, \frac{1}{2})$, there exists three $\F$-measurable random variables $\X_1, \X_2, \Y_1$ satisfying for each $s \in (0,2)$, there exists a constant $C'(U,s, d)$ such that the following holds: $\X_1, \X_2, \Y_1$ have an estimate
\begin{eqnarray}\label{eq:tableGO}
\X_1 \leq \GO_s\left(C'(U,s,d) \ell(\ec)(\log r)^{\frac{1}{s}} \right), & & \X_2 \leq \GO_s\left(C'(U,s,d) \ec^{\frac{d}{2}}(\log r)^{\frac{1}{s}} \right), \\
\Y_1 \leq \GO_s\left(C'(U,s,d) \ell(\ec)(\log r)^{\frac{1}{s}} \right),
\end{eqnarray}
and for every $\vb \in H^1_0(\Ur) \cap H^2(\Ur), \mu \in [0,\lambda]$ and $v \in H^1_0(\Ur)$ satisfying
\begin{equation}
\label{eq:TwoScale}
(\mu^2 - \nabla \cdot a \nabla) v = (\mu^2 - \nabla \cdot \ab \nabla)\vb  \qquad \text{ in } \Ur,
\end{equation}
we have an $H^1$ estimate for the two-scale expansion $w$ associated to $\vb$ defined in \cref{eq:defTwoScale}

\begin{equation}\label{eq:TwoScaleH1}
\begin{split}
\Vert v - w \Vert_{\aH^1(\Ur)} & \leq   C(U,\Lambda,d) \left[ \vphantom{\frac{1}{2}} \Vert \vb \Vert_{\aH^2(\Ur)} + \left( \Vert \vb \Vert_{\aH^2(\Ur)} + \mu \Vert \vb \Vert_{\aH^1(\Ur)}\right) \X_1  \right. \\
& \qquad \left. + \left( \ell(\ec)^{\frac{1}{2}}\Vert \vb \Vert^{\frac{1}{2}}_{\aH^2(\Ur)}  \Vert \vb \Vert^{\frac{1}{2}}_{\aH^1(\Ur)} + \Vert \vb \Vert_{\aH^1(\Ur)} \right) \X_2   \right.\\
& \qquad \left. + \left( \ell(\ec)^{\frac{1}{2}} \left(\mu + \frac{1}{r}+ \frac{1}{\ell(\ec)}\right)\Vert \vb \Vert^{\frac{1}{2}}_{\aH^2(\Ur)}  \Vert \vb \Vert^{\frac{1}{2}}_{\aH^1(\Ur)} + \Vert \vb \Vert_{\aH^2(\Ur)} \right)\Y_1 \right].\\
\end{split}
\end{equation}
\end{theorem}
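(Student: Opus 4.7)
The plan is to adapt the two-scale expansion argument of \cite[Chapter 6]{armstrong2018quantitative} to the regularized operator $\mu^2 - \nabla \cdot \a \nabla$. The starting point is the algebraic identity
\begin{equation*}
\a \nabla w = \a \nabla \vb + \sum_{k=1}^d \phim_{e_k}\,\a \nabla \p{k}(\vb \star \zeta) + \sum_{k=1}^d \p{k}(\vb \star \zeta)\,\a(e_k + \nabla \phim_{e_k}) - \sum_{k=1}^d \p{k}(\vb \star \zeta)\, \ab e_k.
\end{equation*}
Introducing the (regularized) flux corrector $\sigma^{(\ec)}_{e_k}$, a skew-symmetric matrix field whose divergence equals $\a(e_k + \nabla \phim_{e_k}) - \ab e_k$ up to a low-frequency term coming from $\phi_{e_k}\star \Phi_{\ec^{-1}}$, and integrating by parts, I rewrite $-\nabla \cdot \a \nabla w = -\nabla \cdot \ab \nabla \vb + \nabla \cdot \Fi$, where $\Fi$ is a linear combination of products of $\phim_{e_k}$ and $\sigma^{(\ec)}_{e_k}$ against $\nabla^2(\vb \star \zeta)$. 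Subtracting \eqref{eq:TwoScale}, I obtain the residual equation
\begin{equation*}
(\mu^2 - \nabla \cdot \a \nabla)(w - v) = \mu^2(w - \vb) + \nabla \cdot \Fi.
\end{equation*}

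Since $w \notin H^1_0(\Ur)$ in general, I next multiply the corrector part of $w$ by a smooth cutoff $\eta$ vanishing on a boundary strip of width $\sim 1/\ell(\ec)$, producing $\ut \in H^1_0(\Ur)$ and splitting
\begin{equation*}
\Vert v - w\Vert_{\aH^1(\Ur)} \leq \Vert v - \ut\Vert_{\aH^1(\Ur)} + \Vert \ut - w\Vert_{\aH^1(\Ur)}.
\end{equation*}
Testing the residual equation against $v - \ut \in H^1_0(\Ur)$ and using the coercivity of $\mu^2 - \nabla \cdot \a \nabla$ controls $\Vert \nabla(v - \ut)\Vert^2_{\aL^2(\Ur)} + \mu^2 \Vert v - \ut\Vert^2_{\aL^2(\Ur)}$ by $\aL^2$ norms of products such as $\phim_{e_k}\,\p{k}\p{j}(\vb \star \zeta)$ and $|\sigma^{(\ec)}_{e_k}|\,\p{k}\p{j}(\vb \star \zeta)$, plus commutator terms involving $\nabla \eta$; the second summand $\Vert \ut - w\Vert_{\aH^1(\Ur)}$ is a pure boundary-layer norm of the correctors.

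To convert each $L^2$ product into a stochastic bound, I apply \Cref{lem:MixedNorm} at unit scale, obtaining for example
\begin{equation*}
\Vert \phim_{e_k}(\p{k}\p{j}\vb \star \zeta)\Vert_{\aL^2(\Ur)} \leq C\Bigl(\max_{z \in \Zd \cap (\Ur + \cu)} \Vert \phim_{e_k}\Vert_{\aL^2(z+\cu)}\Bigr)\,\Vert \p{k}\p{j}\vb\Vert_{\aL^2(\Ur + \cu_3)}.
\end{equation*}
Standard quantitative estimates on the corrector yield the pointwise bounds $\Vert \phim_{e_k}\Vert_{\aL^2(z+\cu)} \le \GO_s(C\ell(\ec))$ and, for the spatial average, $\Vert \phim_{e_k} \star \zeta\Vert_{\aL^2(z+\cu)} \le \GO_s(C\ec^{d/2})$; the latter is sharper because subtracting $\phi_{e_k}\star \Phi_{\ec^{-1}}$ removes the low-frequency (CLT) mass of the stationary random field. \Cref{lem:max}, applied over the at most $O(r^d)$ unit cubes meeting $\Ur + \cu$, then promotes each such pointwise $\GO_s(\theta)$ into a $\GO_s(\theta(\log r)^{1/s})$ maximum, which yields exactly the random variables $\X_1$ and $\X_2$ with the prefactors in \eqref{eq:tableGO}. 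The $\Y_1$ contribution arises through the same mechanism applied to boundary-strip integrals, its weight coming from the three natural scales $\mu$, $1/r$ and $1/\ell(\ec)$.

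The main obstacle is the boundary layer. Commutator terms of the form $\nabla\eta \cdot \phim_{e_k}\,\p{k}(\vb \star \zeta)$ have $|\nabla\eta| \sim \ell(\ec)$ but are supported on a strip of measure proportional to $1/\ell(\ec)$, and one must interpolate sharply using the $H^2 \cap H^1_0$ regularity of $\vb$ (via the Hardy inequality near $\partial \Ur$, where the $C^{1,1}$ assumption on $\partial U$ is crucial) to match the three distinct regimes encoded in the prefactor $\ell(\ec)^{1/2}(\mu + 1/r + 1/\ell(\ec))\Vert\vb\Vert^{1/2}_{\aH^2(\Ur)}\Vert\vb\Vert^{1/2}_{\aH^1(\Ur)}$ in front of $\Y_1$. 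Tracking these three scale regimes simultaneously without losing logarithms, and ensuring that the random factors $\X_1, \X_2, \Y_1$ remain independent of $\vb$ and of $\mu$ (the key feature needed for the iterative application in \Cref{thm:main}), is the most delicate bookkeeping step in the argument.
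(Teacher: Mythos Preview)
Your overall architecture is sound and close to the paper's: an algebraic rewriting of $\a\nabla w-\ab\nabla\vb$ via the flux corrector, an interior estimate obtained by testing the residual equation, a separate boundary-layer estimate, and then the extraction of the random prefactors through \Cref{lem:MixedNorm} and \Cref{lem:max}. Your use of a cutoff $\eta$ to produce $\tilde u\in H^1_0(\Ur)$ is a legitimate alternative to the paper's route, which instead introduces the $\a$-harmonic extension $b$ of the boundary values of $w-\vb$ and then bounds $\|b\|_{\aH^1}$ by a concrete sub-optimizer $T_{\ec}$ supported in the boundary strip. Both strategies lead to the same kind of bookkeeping.

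There are, however, two concrete gaps.

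\textbf{The structure of $\Fi$ and the origin of $\X_2$.} Your description ``$\Fi$ is a linear combination of products of $\phim_{e_k}$ and $\sigma^{(\ec)}_{e_k}$ against $\nabla^2(\vb\star\zeta)$'' captures only one of the three pieces. Because $\phim_{e_k}=\phi_{e_k}-\phi_{e_k}\star\Phi_{\ec^{-1}}$, the identity $\a(e_k+\nabla\phim_{e_k})-\ab e_k = \nabla\cdot\Sm_{e_k} + (\nabla\cdot\S_{e_k}\star\Phi_{\ec^{-1}} - \a\nabla\phi_{e_k}\star\Phi_{\ec^{-1}})$ produces an additional term in $\Fi$ of the form $(\nabla\S_{e_k}\star\Phi_{\ec^{-1}}-\a\nabla\phi_{e_k}\star\Phi_{\ec^{-1}})\,\p{k}(\vb\star\zeta)$, multiplying only a \emph{first} derivative of $\vb$. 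It is precisely this piece that defines $\X_2$: the random factor is $\max_{z}\bigl(\|\nabla\phi_{e_k}\star\Phi_{\ec^{-1}}\|_{\aL^2(z+\cu)}+\|\nabla\S_{e_k}\star\Phi_{\ec^{-1}}\|_{\aL^2(z+\cu)}\bigr)$, which is $\GO_s(C\ec^{d/2})$ by the CLT scaling of a mean-zero stationary gradient averaged at scale $\ec^{-1}$. Your claimed bound $\|\phim_{e_k}\star\zeta\|_{\aL^2(z+\cu)}\le\GO_s(C\ec^{d/2})$ is not correct: convolving the high-frequency part $\phim_{e_k}$ with a unit-scale bump $\zeta$ does not gain any smallness in $\ec$, and this quantity is still of order $\ell(\ec)$. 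Without the correct third term in $\Fi$ you cannot produce the $\|\vb\|_{\aH^1}\X_2$ contribution in \eqref{eq:TwoScaleH1}.

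\textbf{The boundary-strip width.} You take the cutoff $\eta$ to vanish on a strip of width $\sim 1/\ell(\ec)$, so $|\nabla\eta|\sim\ell(\ec)$. The paper does the opposite: the strip has width $2\ell(\ec)$ and the cutoff satisfies $|\nabla\eta|\sim 1/\ell(\ec)$. This matters in $d=2$. With strip width $h$, the commutator term $(\nabla\eta)\,\phim_{e_k}\,\p{k}(\vb\star\zeta)$ contributes, after the trace-type interpolation $\|\nabla\vb\,\Ind_{\text{strip}}\|_{\aL^2}\le C h^{1/2}\|\vb\|_{\aH^2}^{1/2}\|\vb\|_{\aH^1}^{1/2}$, a factor $h^{-1/2}\Y_1\|\vb\|_{\aH^2}^{1/2}\|\vb\|_{\aH^1}^{1/2}$. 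The choice $h=\ell(\ec)$ gives the $\ell(\ec)^{-1/2}$ coefficient that appears in \eqref{eq:TwoScaleH1} (the $1/\ell(\ec)$ term inside the parenthesis); your choice $h=1/\ell(\ec)$ gives $\ell(\ec)^{1/2}$ instead and loses a full factor of $\ell(\ec)$ in the final bound. Relatedly, the relevant near-boundary inequality is the interpolation/trace estimate \cite[Proposition~A.1]{armstrong2018iterative} for $\nabla\vb\in H^1(\Ur)$, not the Hardy inequality, which would require $\nabla\vb\in H^1_0(\Ur)$.
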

\begin{remark}
The explicit expression of $\X_1, \X_2, \Y_1$ can be checked in \cref{fig:factors}. They are the maximum of local spatial average of gradient and the flux of the first order corrector.
\end{remark}

\begin{proof} We give at first the proof of the deterministic part. We will see that the errors can finally be reduced to the estimates of two norms: the interior error term and a boundary layer term. The latter boundary term comes from the fact that $v$ and $w$ do not have the same boundary condition. So we introduce $b$ the solution of the equation
\begin{equation}
\label{eq:boundary}
\left\{
	\begin{array}{ll}
	(\mu^2 - \nabla \cdot \a \nabla) b = 0  & \text{ in } \Ur,\\
	b = \sum_{k=1}^d \p{k} (\vb \star \zeta) \phim_{e_k}  &\text{ on }\partial \Ur.
	\end{array}
\right.
\end{equation}
Then $(w - b)$ shares the same boundary condition as $v$. So, we have 
\begin{equation}
\label{eq:H1Decomposition}
\Vert v - w \Vert_{\aH^1(\Ur)} \leq \Vert v + b - w \Vert_{\aH^1(\Ur)} + \Vert b \Vert_{\aH^1(\Ur)}, 
\end{equation}
and we do estimates of the two parts respectively.

\begin{itemize}
\item \textbf{Estimate for $(v + b - w)$.}
We denote by $z:= v + b - w \in H^1_0(\Ur)$ and test it in \cref{eq:TwoScale} and \cref{eq:boundary}
\begin{eqnarray*}
\mu^2\int_{\Ur} z v  + \int_{\Ur} \nabla z \cdot \a \nabla v  &=& \mu^2\int_{\Ur} z \vb  + \int_{\Ur} \nabla z \cdot \ab \nabla \vb,   \\
\mu^2\int_{\Ur} z b  + \int_{\Ur} \nabla z \cdot \a \nabla b  &=& 0 .
\end{eqnarray*}
We do the sum to obtain that 
\begin{eqnarray*}
\mu^2\int_{\Ur} z (v + b)  + \int_{\Ur} \nabla z \cdot \a \nabla (v + b)  = \mu^2\int_{\Ur} z \vb  + \int_{\Ur} \nabla z \cdot \ab \nabla \vb.
\end{eqnarray*}
Using the fact $v + b = z + w$, we obtain 
$$
\mu^2\int_{\Ur}  |z|^2  + \int_{\Ur} \nabla z \cdot \a \nabla z  = \mu^2\int_{\Ur} z (\vb - w)  + \int_{\Ur} \nabla z \cdot (\ab \nabla \vb - \a \nabla w), 
$$
and we apply the uniform ellipticity condition to obtain
\begin{eqnarray*}
\mu^2 \Vert z \Vert^2_{\aL^2(\Ur)} + \Lambda^{-1}\Vert \nabla z \Vert^2_{\aL^2(\Ur)} &\leq & \mu^2 \Vert z \Vert_{\aL^2(\Ur)} \Vert w - \vb \Vert_{\aL^2(\Ur)} \\ 
& & + \Vert z \Vert_{\aH^1(\Ur)} \Vert \nabla \cdot \a \nabla w - \nabla \cdot \ab \nabla \vb \Vert_{\aH^{-1}(\Ur)}\\
(\text{Young's inequality})&\leq & \mu^2 \Vert z \Vert^2_{\aL^2(\Ur)} + \frac{\mu^2}{4}\Vert w - \vb \Vert^2_{\aL^2(\Ur)} \\
& & + \frac{\Lambda^{-1}}{2}\Vert z \Vert^2_{\aH^1(\Ur)} + \frac{\Lambda}{2} \Vert \nabla \cdot \a \nabla w - \nabla \cdot \ab \nabla \vb \Vert^2_{\aH^{-1}(\Ur)} \\
\Longrightarrow  \Vert \nabla z \Vert_{\aL^2(\Ur)} &\leq & \Lambda \Vert \nabla \cdot \a \nabla w - \nabla \cdot \ab \nabla \vb \Vert_{\aH^{-1}(\Ur)} + \sqrt{\Lambda}\mu \Vert w - \vb \Vert_{\aL^2(\Ur)} .
\end{eqnarray*}
We use Poincar\'e's inequality to conclude that 
\begin{equation}\label{eq:Clever}
\Vert z \Vert_{\aH^1(\Ur)} \leq  C(U) \left(\Lambda \Vert \nabla \cdot \a \nabla w - \nabla \cdot \ab \nabla \vb \Vert_{\aH^{-1}(\Ur)} + \sqrt{\Lambda}\mu \Vert w - \vb \Vert_{\aL^2(\Ur)}  \right).
\end{equation}

\item \textbf{Estimate for $b$.}
To estimate $b$ we use the property that it is the optimizer of the problem 
$$
\inf_{ \chi \in b + H^1_0(\Ur)} \mu^2\int_{\Ur} \chi^2 + \int_{\Ur} \nabla \chi \cdot \a \nabla \chi.
$$
So we give an upper bound of this functional by the following trial function 
\begin{equation}\label{eq:defTec}
T_{\ec} := \left(\Ind_{\Rd \backslash U_{r, 2 \ell(\ec)}} \star \zeta_{ \ell(\ec)} \right) \sum_{k=1}^d \p{k} (\vb \star \zeta) \phim_{e_k},
\end{equation}
where $U_{r, 2 \ell(\ec)}$ is defined as 
$$
U_{r, 2 \ell(\ec)} = \{x \in \Ur | d(x, \partial \Ur) > 2 \ell(\ec) \}.
$$
The motivation to test $T_{\ec}$ is the following: If we think the solution of elliptic equation is an average in some sense of the boundary value, then when the coefficient is oscillating, the boundary value is hard to propagate. So one naive candidate is just smoothing the boundary value in a small band of length $2 \ell(\ec)$.

By comparison, 
\begin{eqnarray*}
\mu^2\int_{\Ur} |b|^2  + \int_{\Ur} \nabla b \cdot \a \nabla b  &\leq & \mu^2\int_{\Ur} |T_{\ec}| ^2  + \int_{\Ur} \nabla T_{\ec}  \cdot \a \nabla T_{\ec}  \\
\Longrightarrow \Vert \nabla b \Vert_{\aL^2(\Ur)} &\leq & \mu \sqrt{\Lambda}\Vert T_{\ec} \Vert_{\aL^2(\Ur)} + \Lambda \Vert \nabla T_{\ec} \Vert_{\aL^2(\Ur)}.
\end{eqnarray*}

Moreover, to estimate the $L^2$ norm, we use once again Poincar\'e's inequality
\begin{eqnarray*}
\Vert b \Vert_{\aL^2(\Ur)} &=& \Vert b  - T_{\ec} + T_{\ec} \Vert_{\aL^2(\Ur)} \\
&\leq & \Vert b  - T_{\ec} \Vert_{\aL^2(\Ur)} + \Vert T_{\ec} \Vert_{\aL^2(\Ur)} \\
\text{(Poincar\'e's inequality)} &\leq & r \Vert \nabla(b  - T_{\ec}) \Vert_{\aL^2(\Ur)} + \Vert T_{\ec} \Vert_{\aL^2(\Ur)}\\
&\leq &  r\Vert \nabla b \Vert_{\aL^2(\Ur)} +  r\Vert \nabla T_{\ec} \Vert_{\aL^2(\Ur)} + \Vert T_{\ec} \Vert_{\aL^2(\Ur)}.
\end{eqnarray*}

We combine the two and get an estimate of $b$
\begin{eqnarray*}
\Vert b \Vert_{\aH^1(\Ur)} & = &\frac{1}{|\Ur|^{\frac{1}{d}}}\Vert b \Vert_{\aL^2(\Ur)} + \Vert \nabla b \Vert_{\aL^2(\Ur)}\\
& \leq & C(U)\left( \frac{1}{r}\Vert T_{\ec} \Vert_{\aL^2(\Ur)} + \mu \sqrt{\Lambda}\Vert T_{\ec} \Vert_{\aL^2(\Ur)} + (1+\Lambda) \Vert \nabla T_{\ec} \Vert_{\aL^2(\Ur)} \right).
\end{eqnarray*}
\end{itemize}
Finally, we put all the estimates above into \cref{eq:H1Decomposition}
\begin{equation}
\begin{split}\label{eq:H1Decomposition2}
\Vert v - w \Vert_{\aH^1(\Ur)} & \leq C(U, \Lambda)\left(  \vphantom{\frac{1}{2}} \Vert \nabla \cdot (\a \nabla w - \ab \nabla \vb) \Vert_{\aH^{-1}(\Ur)} + \mu \Vert w - \vb \Vert_{\aL^2(\Ur)} \right. \\
& \qquad  \qquad  \qquad \left. + \Vert \nabla T_{\ec} \Vert_{\aL^2(\Ur)} + \left(\frac{1}{r}+ \mu \right)\Vert T_{\ec} \Vert_{\aL^2(\Ur)} \right).
\end{split}
\end{equation}
To complete the proof of \Cref{thm:TwoScale}, we have to treat the random norms in \cref{eq:H1Decomposition2} respectively. It is the main task of the next section.
\end{proof}

\subsection{Construction of a vector field}
In this part, we analyze $\Vert \nabla \cdot (\a \nabla w - \ab \nabla \vb )\Vert_{\aH^{-1}(\Ur)} $ with the help of the \textit{flux corrector}.  Similar formulas appear both in \cite[Lemma 3.3]{armstrong2018iterative} and \cite[Chapter 6, Lemma 6.7]{armstrong2018quantitative}, here we give the version in our context.

At first, we introduce the \emph{flux corrector} $\S_p$.  For every $p \in \Rd$, since $\mathbf{g}_p := \a(p + \nabla \phi_p) - \ab p$ defines a divergence free field, i.e. $\nabla \cdot \mathbf{g}_p = 0 $, it admits a representation as the "curl" of some potential vector by Helmholtz's theorem. That is there exists a $\mathbb{R}^{d \times d}$ skew-symmetric matrix $\S_p$ such that
$$
\a(p + \nabla \phi_p) - \ab p = \nabla \cdot \S_p,
$$
where $\nabla \cdot \S_p $ is a $\Rd$ valued vector defined by $(\nabla \cdot \S_p)_i = \sum_{j=1}^d \p{j} \S_{p,ij}$. In order to "fix the gauge", for each $i,j \in \{1,2, \cdots d\}$, we force
$$
\Delta \S_{p,ij} = \p{j} \mathbf{g}_{p,i} - \p{i} \mathbf{g}_{p,j},
$$
and under this condition, $\S_p$ is unique up to a constant and $\{\nabla \S_p,ij, p \in \Rd, i,j \in \{1,2, \cdots d\}\}$ also forms a family of $\Zd$-stationary field like $\nabla \phi_p$. This quantity appears in the early work of periodic homogenization \cite[Lemma 3.1]{AveLin} and \cite[Lemma 4.4]{jikov2012homogenization}. See \cite[Lemma 6.1]{armstrong2018quantitative} for the well-definedness of $\S_p$ in stochatic homogenization setting and this quantity is also adapted in \cite[Lemma 1]{gloria2015quantification} as \emph{extended corrector}. We also define 
$$
\S_{p}^{(\ec)} = \S_{p} - \S_{p} \star \Phi_{\ec^{-1}},
$$
to truncate the constant part, so $\S_{p}^{(\ec)}$ is well-defined and $\Zd$-stationary.

We have the following identity.
\begin{lemma}
\label{lem:F}
For $\lambda > 0, \vb \in H_0^1(\Ur) \cap H^2(\Ur)$ and $w \in H^1(\Ur)$ as in \Cref{thm:TwoScale}. We construct a vector field $\Fi$ such that
$$
\nabla \cdot (\a \nabla w - \ab \nabla \vb) = \nabla \cdot \Fi,
$$
whose i-th component is given by 

\begin{equation}
\begin{split}\label{eq:F}
\Fi_i &= \sum_{j=1}^d (\a_{ij} - \ab_{ij}) \p{j} (\vb - \vb \star \zeta) + \sum_{j,k=1}^{d} \left(\a_{ij} \phim_{e_k} - \Sm_{e_k, ij}\right)\p{j}\p{k} (\vb \star \zeta) \\
& \qquad + \sum_{j,k=1}^{d} \left(\p{j}\S_{e_k,ij} \star \Phi_{\ec^{-1}} - \a_{ij}\p{j}\phi_{e_k} \star \Phi_{\ec^{-1}}\right) \p{k}(\vb \star \zeta).
\end{split}
\end{equation}
\end{lemma}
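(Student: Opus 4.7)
The plan is a direct computation: expand $\a\nabla w$ using the definition \cref{eq:defTwoScale}, substitute the corrector/flux-corrector identity, and then verify that the discrepancy between the expression obtained and the stated $\Fi$ is the $j$-divergence of $\Sm_{e_k,ij}\p{k}(\vb\star\zeta)$, hence annihilated upon taking one further divergence in $i$ thanks to the skew-symmetry of $\Sm_{e_k}$.

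First I would compute, component by component,
\[
(\a\nabla w)_i = \sum_j \a_{ij}\p{j}\vb + \sum_{j,k}\a_{ij}\,\phim_{e_k}\,\p{j}\p{k}(\vb\star\zeta)+ \sum_{j,k}\a_{ij}\,\p{j}\phim_{e_k}\,\p{k}(\vb\star\zeta),
\]
and therefore $(\a\nabla w - \ab\nabla\vb)_i$ differs from this by $\sum_j(\ab_{ij}-\a_{ij})\p{j}\vb$ plus the rest. The third sum is treated by writing $\p{j}\phim_{e_k}=\p{j}\phi_{e_k}-(\p{j}\phi_{e_k})\star\Phi_{\ec^{-1}}$, and using the corrector/flux identity
\[
\sum_j \a_{ij}(\delta_{jk}+\p{j}\phi_{e_k}) = \ab_{ik} + \sum_j\p{j}\S_{e_k,ij},
\]
to turn $\sum_j\a_{ij}\p{j}\phi_{e_k}$ into $\ab_{ik}-\a_{ik}+\sum_j\p{j}\S_{e_k,ij}$. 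Regrouping the $(\a-\ab)$ pieces gives $\sum_j(\a_{ij}-\ab_{ij})\p{j}(\vb-\vb\star\zeta)$, so after these substitutions one obtains
\begin{align*}
(\a\nabla w - \ab\nabla\vb)_i &= \sum_j(\a_{ij}-\ab_{ij})\p{j}(\vb-\vb\star\zeta) + \sum_{j,k}\a_{ij}\phim_{e_k}\p{j}\p{k}(\vb\star\zeta) \\
&\quad + \sum_{j,k}\p{j}\S_{e_k,ij}\,\p{k}(\vb\star\zeta) - \sum_{j,k}\a_{ij}\bigl[(\p{j}\phi_{e_k})\star\Phi_{\ec^{-1}}\bigr]\p{k}(\vb\star\zeta).
\end{align*}

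Comparing this with the proposed $\Fi_i$ in \cref{eq:F}, the difference simplifies (after cancelling the $(\a-\ab)$ term and the mollified $\a_{ij}\p{j}\phi_{e_k}$ term) to
\[
\Fi_i - (\a\nabla w - \ab\nabla\vb)_i = -\sum_{j,k}\Sm_{e_k,ij}\p{j}\p{k}(\vb\star\zeta) - \sum_{j,k}\p{j}\Sm_{e_k,ij}\,\p{k}(\vb\star\zeta) = -\sum_{j,k}\p{j}\bigl[\Sm_{e_k,ij}\,\p{k}(\vb\star\zeta)\bigr],
\]
where I used $\p{j}\Sm_{e_k,ij}=\p{j}\S_{e_k,ij}-\p{j}\S_{e_k,ij}\star\Phi_{\ec^{-1}}$ to match the mollified flux term. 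Taking one more divergence, $\sum_i\p{i}(\Fi_i-(\a\nabla w-\ab\nabla\vb)_i)=-\sum_{i,j,k}\p{i}\p{j}[\Sm_{e_k,ij}\p{k}(\vb\star\zeta)]$, which vanishes by the skew-symmetry $\Sm_{e_k,ij}=-\Sm_{e_k,ji}$: the tensor $\p{i}\p{j}$ is symmetric in $(i,j)$ while the bracket is antisymmetric in $(i,j)$, so the contraction is zero.

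The main obstacle is purely bookkeeping: keeping the three distinct objects $\phi_{e_k}$, $\phim_{e_k}=\phi_{e_k}-\phi_{e_k}\star\Phi_{\ec^{-1}}$, and $\Sm_{e_k}=\S_{e_k}-\S_{e_k}\star\Phi_{\ec^{-1}}$ straight when substituting the corrector identity, so that the mollified pieces assemble exactly into the last sum of \cref{eq:F}. Once that is handled, the final skew-symmetry cancellation is the one genuinely non-routine ingredient, and it is the same mechanism used for the classical (non-regularized) flux corrector in \cite[Chapter~6]{armstrong2018quantitative}.
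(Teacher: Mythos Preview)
Your proof is correct and follows essentially the same route as the paper: expand $\a\nabla w-\ab\nabla\vb$ using the definition of $w$, invoke the corrector/flux-corrector identity $\a(e_k+\nabla\phi_{e_k})-\ab e_k=\nabla\cdot\S_{e_k}$, split into mollified and non-mollified parts, and then kill the leftover term $\sum_{j}\p{j}\bigl[\Sm_{e_k,ij}\p{k}(\vb\star\zeta)\bigr]$ under one further divergence by the skew-symmetry of $\Sm_{e_k}$. The only organizational difference is that you compute the discrepancy $\Fi_i-(\a\nabla w-\ab\nabla\vb)_i$ at the end and show its $i$-divergence vanishes, whereas the paper tracks each piece of $\a\nabla w-\ab\nabla\vb$ and matches it to a term of $\Fi$ as it goes; the substance is identical.
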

\begin{proof}
We develop
\begin{eqnarray*}
\left[\a \nabla w - \ab \nabla \vb \right]_i &=&  \left[ \a \nabla \left( \vb +  \sum_{k=1}^d \p{k} (\vb \star \zeta) \phim_{e_k} \right) - \ab \nabla \vb \right]_i \\
&=& \left[ (\a - \ab) \nabla \vb +  \a \nabla \sum_{k=1}^d \left(   \p{k} (\vb \star \zeta) \phim_{e_k} \right) \right]_i \\
&=& \underbrace{\left[ (\a - \ab) \nabla (\vb - \vb \star \zeta)\right]_i}_{\mathbf{I}} + \underbrace{\left[(\a - \ab)\nabla (\vb \star \zeta) + \a \nabla \sum_{k=1}^d \left(   \p{k} (\vb \star \zeta) \phim_{e_k} \right) \right]_i}_{\mathbf{II}}.
\end{eqnarray*}
The first term is indeed 
$$
\mathbf{I}=\left[(\a - \ab) \nabla (\vb - \vb \star \zeta)\right]_i = \sum_{j=1}^{d}(\a_{ij} - \ab_{ij}) \p{j}(\vb - \vb \star \zeta),
$$
as in the right hand side of the identity, so we continue to study the rest of the formula.
\begin{eqnarray*}
\mathbf{II}&=& \sum_{j=1}^{d}(\a_{ij} - \ab_{ij}) \p{j}(\vb \star \zeta) + \sum_{j,k=1}^d \a_{ij} \p{j}\left(   \p{k} (\vb \star \zeta) \phim_{e_k} \right)\\
&=& \sum_{j,k=1}^{d}(\a_{ij} - \ab_{ij}) \p{j}(\vb \star \zeta) \delta_{jk} + \sum_{j,k=1}^d  \a_{ij} \p{j} \p{k} (\vb \star \zeta) \phim_{e_k} + \sum_{j,k=1}^d  \a_{ij} \p{k} (\vb \star \zeta) \p{j}\phim_{e_k}\\
&=& \underbrace{\sum_{j,k=1}^{d}\left((\a_{ij} - \ab_{ij})\delta_{jk} + \a_{ij} \p{j}\phim_{e_k} \right) \p{k}(\vb \star \zeta)}_{\mathbf{II.1}}  + \underbrace{\sum_{j,k=1}^d  \a_{ij} \p{j} \p{k} (\vb \star \zeta) \phim_{e_k}}_{\mathbf{II.2}}.
\end{eqnarray*}
$\mathbf{II.2}$ appears in the right hand side of the formula, so it remains $\mathbf{II.1}$ to treat. We use the the definition of $\S_{e_k}^{(\ec)}$ in $\mathbf{II.1}$
\begin{eqnarray*}
\mathbf{II.1} &=& \sum_{k=1}^{d}\left[ \a(e_k + \nabla \phim_{e_k}) - \ab e_k   \right]_i \p{k}(\vb \star \zeta)\\
 &=& \sum_{k=1}^{d}\left[ \a(e_k + \nabla \phi_{e_k})- \ab e_k  - \a \nabla \phi_{e_k} \star \Phi_{\ec^{-1}}  \right]_i \p{k}(\vb \star \zeta)\\
 &=& \sum_{k=1}^{d}\left[ \nabla \cdot \S_{e_k}  - \a \nabla \phi_{e_k} \star \Phi_{\ec^{-1}} \right]_i \p{k}(\vb \star \zeta)\\
 &=& \underbrace{\sum_{k=1}^d \left[ \nabla \cdot \Sm_{e_k} \right]_i \p{k}(\vb \star \zeta)}_{\mathbf{III}} + \sum_{k=1}^{d} \left[\nabla \cdot \S_{e_k} \star \Phi_{\ec^{-1}} - \a\nabla \phi_{e_k} \star \Phi_{\ec^{-1}} \right]_i \p{k}(\vb \star \zeta).
\end{eqnarray*}
All the terms match well except $\mathbf{III}$, where we have to look for an equal form after divergence. Thanks to the property of skew-symmetry, we have
\begin{eqnarray*}
\nabla \cdot \mathbf{III} &=& \nabla \cdot \left( \sum_{k=1}^d \left[ \nabla \cdot \Sm_{e_k} \right]_i \p{k}(\vb \star \zeta) \right) \\
&=& \sum_{i,j,k=1}^d \p{i}\left(\p{j}\Sm_{e_k,ij} \, \p{k}(\vb \star \zeta) \right)\\
(\text{Integration by parts}) &=& \sum_{i,j,k=1}^d \p{i}\p{j}\left(\Sm_{e_k,ij}  \p{k}(\vb \star \zeta) \right) - \p{i}\left(\Sm_{e_k,ij} \, \p{j}\p{k}(\vb \star \zeta) \right)\\
(\text{Skew-symmetry of }\S)&=& - \nabla \cdot \left(\sum_{j,k=1}^d \Sm_{e_k,ij}  \, \p{j}\p{k}(\vb \star \zeta) \right). 
\end{eqnarray*}
This finishes the proof.
\end{proof}

\subsection{Quantitative description of $\phim_{e_k}$ and $\Sm_{e_k}$}
In this subsection, we will give some quantitative descriptions of $\phim_{e_k}$ and $\Sm_{e_k}$, which serve as the bricks to form $\X_1, \X_2, \Y_1$.
\begin{lemma}[Estimate of corrector]
\label{lem:Cell}
For each $s \in (0,2)$, there exists a constant $0 < C(s,\Lambda, d) < \infty$ such that for every $\ec \in (0,1)$, $i,j,k \in \{1, \cdots d\}, z \in \mathbb{Z}^d$
\begin{equation}
\label{eq:CellEnergy1}
\begin{split}
\Vert \nabla \phi_{e_k} \star \Phi_{\ec^{-1}} \Vert_{\aL^2(z + \cu)} \leq \GO_s(C \ec^{\frac{d}{2}}),  & \quad \Vert \nabla \S_{e_k,ij} \star \Phi_{\ec^{-1}} \Vert_{\aL^2(z + \cu)} \leq \GO_s(C \ec^{\frac{d}{2}}), \\
\Vert \phim_{e_k} \Vert_{\aL^2(z +  \cu)} \leq \GO_s(C \ell(\ec)),  & \quad \Vert \Sm_{e_k, ij} \Vert_{\aL^2(z + \cu)} \leq \GO_s(C \ell(\ec)).
\end{split}
\end{equation}
\end{lemma}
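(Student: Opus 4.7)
The plan is to deduce these four estimates from the quantitative corrector bounds of Armstrong--Kuusi--Mourrat \cite{armstrong2018quantitative}. The four bounds fall into two families: the first two concern heat-kernel averages of the gradients $\nabla \phi_{e_k}$ and $\nabla \S_{e_k,ij}$ at scale $\lambda^{-1}$, while the last two concern the high-frequency remainders $\phim_{e_k}$ and $\Sm_{e_k,ij}$.

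For the first family, I would use that $\nabla \phi_{e_k}$ and $\nabla \S_{e_k,ij}$ are $\Zd$-stationary mean-zero fields with approximate locality inherited from the finite range of dependence of $\a$ (for $\nabla \S_{e,ij}$, the mean-zero and stationarity properties follow from the defining Poisson equation $\Delta \S_{e,ij} = \partial_j \mathbf{g}_{e,i} - \partial_i \mathbf{g}_{e,j}$ together with the stationarity of $\mathbf{g}_e$). The CLT-type quantitative estimates for heat-kernel spatial averages developed in \cite[Chapter 4]{armstrong2018quantitative} then yield pointwise $\GO_s$ bounds of size $\lambda^{d/2}$, reflecting the natural scaling that averaging a mean-zero stationary field over a region of volume $\lambda^{-d}$ produces fluctuations of order $\lambda^{d/2}$. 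Passing from pointwise bounds to $\aL^2(z+\cu)$ bounds is immediate by stationarity and Fubini, since the distribution of the integrand does not depend on the base point.

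For the second family, the plan is to invoke the standard quantitative bounds on the oscillation of $\phi_{e_k}$ (resp.\ $\S_{e_k,ij}$) around its large-scale heat-kernel average. Once normalized by subtracting $\phi_{e_k}\star\Phi_{\lambda^{-1}}$, the corrector satisfies local $\aL^2$-estimates of order $\GO_s(C\ell(\lambda))$: uniformly bounded in $d\geq 3$ (where the corrector itself is bounded) and of order $\sqrt{\log(1+\lambda^{-1})}$ in $d=2$ (where the corrector grows like $\sqrt{\log|x|}$). One concrete route is to use the heat-semigroup decomposition
\[
\phim_{e_k} = -\int_0^{\lambda^{-1}} \partial_t(\phi_{e_k}\star\Phi_t)\,dt,
\]
combined with the identity $\partial_t \Phi_t = 2t\Delta\Phi_t$ and a careful $L^2$-orthogonality estimation of the resulting $t$-integral; the analogous argument handles $\Sm_{e_k,ij}$.

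The most delicate step is extracting the sharp square-root exponent on the logarithm in dimension two: one must preserve $L^2$-orthogonality in the time integral rather than use a naive triangle inequality on gradient bounds, otherwise the log factor picks up the wrong power. Apart from this subtlety, the four bounds are essentially statements already established in \cite{armstrong2018quantitative}, and the task reduces to invoking the appropriate theorems and translating them to the local $\aL^2(z+\cu)$-formulation needed later in this article.
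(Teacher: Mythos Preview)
Your overall plan---reduce everything to the quantitative corrector bounds of \cite{armstrong2018quantitative}---is correct and is exactly what the paper does. For the first family (the heat-kernel averages of $\nabla\phi_{e_k}$ and $\nabla\S_{e_k,ij}$), your route via the pointwise CLT-type bound from \cite[Chapter~4]{armstrong2018quantitative} followed by integration using \cref{eq:OSum} coincides with the paper's proof.

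For the second family, your proposed heat-semigroup integral
\[
\phim_{e_k} = -\int_0^{\lambda^{-1}} \partial_t(\phi_{e_k}\star\Phi_t)\,dt
\]
with an $L^2$-orthogonality argument is a genuine detour. The paper avoids the ``delicate step'' you worry about entirely. In $d\geq 3$ it simply splits $\phim_{e_k}=\phi_{e_k}-\phi_{e_k}\star\Phi_{\lambda^{-1}}$ and bounds each piece separately, using that $\|\phi_{e_k}\|_{\aL^2(z+\cu)}\leq\GO_2(C)$ (this is \cite[Theorem~4.1]{armstrong2018quantitative}) together with a H\"older estimate on the convolution. In $d=2$ it inserts the intermediate scale $\Phi_2$, writing
\[
\phim_{e_k} = (\phi_{e_k}-\phi_{e_k}\star\Phi_2(z)) + (\phi_{e_k}\star\Phi_2(z)-\phi_{e_k}\star\Phi_2) + (\phi_{e_k}\star\Phi_2-\phi_{e_k}\star\Phi_{\lambda^{-1}}),
\]
and applies the two-scale oscillation estimates (already carrying the sharp $\log^{1/2}$) from \cite[Theorem~4.1]{armstrong2018quantitative} term by term. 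So the correct $\log^{1/2}$ is inherited directly from the cited input, not produced by an orthogonality argument. Your route could be made to work, but it re-derives what is already available and is where the risk of losing the exponent actually lies; the paper's approach is shorter and safer.
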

\begin{proof}
We study at first the part $\phim_{e_k}$. \cite[Theorem 4.1]{armstrong2018quantitative}  gives us three useful estimates

\begin{itemize}
\item $d \geq 2, r > 1$, for any $x \in \Rd$
\begin{equation}\label{eq:CellCalssical1}
  |\nabla \phi_{e_k} \star \Phi_{r}(x)| \leq  \GO_s(C(s,d,\Lambda) r^{-\frac{d}{2}}).  
\end{equation}
\item $d \geq3,$
\begin{equation}\label{eq:CellCalssical2}
\Vert \phi_{e_k} \Vert_{\aL^2(\cu)} \leq  \GO_{2}(C(d, \Lambda)). 
\end{equation}
\item $d = 2$, for any $2 \leq r < R < \infty$, and $x,y\in \Rd,$ 
\begin{subequations}
\begin{empheq}{align}
\Vert \phi_{e_k} - \phi_{e_k} \star \Phi_{r}(0) \Vert_{\aL^2(\cu_r)} &\leq  \GO_s(C(s,\Lambda) \log^{\frac{1}{2}}r),  \label{eq:CellCalssical3}\\
|(\phi_{e_k} * \Phi_r)(x) - (\phi_{e_k} * \Phi_R)(y)| &\leq  \GO_s\left(C(s,\Lambda) \log^{\frac{1}{2}} \left( 2 + \frac{R + |x-y|}{r}\right) \right).  \label{eq:CellCalssical4}
\end{empheq}
\end{subequations}
\end{itemize}
Informally speaking, the behavior of corrector when $d \geq 3$ is of size constant, but has a logarithm increment when $d = 2$ and this explicates why we have $\ell(\ec)$ in \cref{eq:CellEnergy1}.

\begin{enumerate}
\item{Proof of $\Vert \nabla \phi_{e_k} \star \Phi_{\ec^{-1}} \Vert_{\aL^2(z + \cu)} \leq \GO_s(C \ec^{\frac{d}{2}}).$}

By choosing $r = \ec^{-1}$ in \cref{eq:CellCalssical1} and using \cref{eq:OSum}, we have
\begin{eqnarray*}
 |\nabla \phi_{e_k} \star \Phi_{\ec^{-1}}(x)| &\leq & \GO_{s}(C \ec^{\frac{d}{2}})\\
\Longrightarrow  |\nabla \phi_{e_k} \star \Phi_{\ec^{-1}}(x)|^2  &\leq & \GO_{s/2}(C^2 \ec^{d}) \\
\Longrightarrow  \int_{z + \cu} |\nabla \phi_{e_k} \star \Phi_{\ec^{-1}}(x)|^2 \, dx &\leq & \GO_{s/2}(C^2 \ec^{d}) \\
\Longrightarrow \Vert \nabla \phi_{e_k} \star \Phi_{\ec^{-1}} \Vert_{\aL^2(z + \cu)} &\leq &\GO_s(C \ec^{\frac{d}{2}}).
\end{eqnarray*}

\item{Proof that if $d \geq 3,$ then $\Vert \phim_{e_k} \Vert_{\aL^2(z +  \cu)} \leq \GO_s(C).$}
We apply \cref{eq:CellCalssical2} to get that 
\begin{eqnarray*}
\Vert \phim_{e_k} \Vert_{\aL^2(z +  \cu)} \leq  \Vert \phi_{e_k} \Vert_{\aL^2(z +  \cu)} +  \Vert  \phi_{e_k} \star \Phi_{\ec^{-1}} \Vert_{\aL^2(z +  \cu)},
\end{eqnarray*}
where the first one comes from a modified version of \cref{eq:CellCalssical2}. In fact, by the $\Zd$-stationarity of $\phi$ \Cref{def:Corrector}, we have $\Vert \phi_{e_k} \Vert_{\aL^2(z + \cu)} \leq  \GO_{2}(C(d, \Lambda))$, then for every $s \in (0,2)$, we set $C(s, d,\Lambda) = 3^{\frac{1}{s}}C(d, \Lambda)$, then
\begin{eqnarray*}
\Expt\left[\exp\left(\left(\frac{\Vert \phi_{e_k} \Vert_{\aL^2(z + \cu)}}{C(s, d,\Lambda)}\right)^s_+\right)\right] 
&=&  \Expt\left[\exp\left(\frac{1}{3}\left(\frac{\Vert \phi_{e_k} \Vert_{\aL^2(z + \cu)}}{C(d, \Lambda)}\right)^s\right)\right] \\
&=&\Expt\left[\left(\exp\left(\left(\frac{\Vert \phi_{e_k} \Vert_{\aL^2(z + \cu)}}{C(d, \Lambda)}\right)^s\right)\right)^{\frac{1}{3}}\right] \\ 
(\text{Jensen's inequality})&\leq & \left(\Expt\left[\exp\left(\left(\frac{\Vert \phi_{e_k} \Vert_{\aL^2(z + \cu)}}{C(d, \Lambda)}\right)^s\right)\right]\right)^{\frac{1}{3}}.
\end{eqnarray*}
We decompose the last term into two parts
\begin{eqnarray*}
& & \Expt\left[\exp\left(\left(\frac{\Vert \phi_{e_k} \Vert_{\aL^2(z + \cu)}}{C(s, d,\Lambda)}\right)^s_+\right)\right]\\
& \leq & \left(\Expt\left[\exp\left(\left(\frac{\Vert \phi_{e_k} \Vert_{\aL^2(z + \cu)}}{C(d, \Lambda)}\right)^s\right)\Ind_{\{\Vert \phi_{e_k} \Vert_{\aL^2(z + \cu)} \leq C(d, \Lambda)\}}\right] \right. \\
& & \qquad + \left. \Expt\left[\exp\left(\left(\frac{\Vert \phi_{e_k} \Vert_{\aL^2(z + \cu)}}{C(d, \Lambda)}\right)^s\right)\Ind_{\{\Vert \phi_{e_k} \Vert_{\aL^2(z + \cu)} \geq C(d, \Lambda)\}}\right] \right)^{\frac{1}{3}} \\ 
& \leq & \left(\underbrace{\Expt\left[\exp\left(\left(\frac{\Vert \phi_{e_k} \Vert_{\aL^2(z + \cu)}}{C(d, \Lambda)}\right)^s\right)\Ind_{\{\Vert \phi_{e_k} \Vert_{\aL^2(z + \cu)} \leq C(d, \Lambda)\}}\right]}_{\leq e} \right. \\
& & \qquad + \left. \underbrace{\Expt\left[\exp\left(\left(\frac{\Vert \phi_{e_k} \Vert_{\aL^2(z + \cu)}}{C(d, \Lambda)}\right)^2\right)\Ind_{\{\Vert \phi_{e_k} \Vert_{\aL^2(z + \cu)} \geq C(d, \Lambda)\}}\right]}_{\leq 2} \right)^{\frac{1}{3}} \\ 
&\leq  &(e + 2)^{\frac{1}{3}} \leq  2.
\end{eqnarray*}
Thus we prove that for any $s \in (0,2), C(s, d,\Lambda) = 3^{\frac{1}{s}}C(d, \Lambda)$
\begin{equation}\label{eq:O2toOs}
\Vert \phi_{e_k} \Vert_{\aL^2(z + \cu)} \leq  \GO_{2}(C( d, \Lambda)) \Longrightarrow  \Vert \phi_{e_k} \Vert_{\aL^2(z + \cu)} \leq  \GO_{s}(C(s, d, \Lambda)). 
\end{equation}
We focus on the second one that 
\begin{eqnarray*}
\Vert  \phi_{e_k} \star \Phi_{\ec^{-1}} \Vert^2_{\aL^2(z +  \cu)} &=& \int_{z+\cu} \left| \int_{\Rd} \phi_{e_k}(x-y) \Phi_{\ec^{-1}}(y) \, dy \right|^2 \, dx \\
&=& \int_{z+\cu} \left| \int_{\Rd} \phi_{e_k}(x-y) \Phi^{\frac{1}{2}}_{\ec^{-1}}(y)  \Phi^{\frac{1}{2}}_{\ec^{-1}}(y) \, dy \right|^2 \, dx \\
(\text{H\"older's inequality})&\leq & \int_{z+\cu}  \left( \int_{\Rd} \phi^2_{e_k}(x-y) \Phi_{\ec^{-1}}(y)  \, dy \right) \underbrace{\left( \int_{\Rd} \Phi_{\ec^{-1}}(y) \, dy \right)}_{=1} \, dx \\
&=& \int_{z+\cu}  \int_{\Rd} \phi^2_{e_k}(x-y) \Phi_{\ec^{-1}}(y)  \, dy  \, dx \\
(\cref{eq:OSum}) &\leq & \GO_s(C).
\end{eqnarray*}
In the last step, we treat $\Phi_{\ec^{-1}}$ as a weight for different small cubes so we could apply \cref{eq:OSum} and \cref{eq:O2toOs}.

\item{Proof that if $d = 2$, then $\Vert \phim_{e_k} \Vert_{\aL^2(z +  \cu)} \leq \GO_s(C \ell(\ec)).$}

This part is a little more difficult than the case $d \geq 3$ since we have only \cref{eq:CellCalssical3} and \cref{eq:CellCalssical4} instead of \cref{eq:CellCalssical2} when $d=2$. This foreces us to do more steps of difference.

We apply \cref{eq:CellCalssical3} \cref{eq:CellCalssical4} to $\Vert \phim_{e_k} \Vert_{\aL^2(z + \cu)} $ for any $\ec \in (0,\frac{1}{2}]$
\begin{eqnarray*}
\Vert \phim_{e_k} \Vert_{\aL^2(z + \cu)} &=& \Vert \phi_{e_k} - \phi_{e_k} \star \Phi_{\ec^{-1}} \Vert_{\aL^2(z + \cu)} \\
&\leq & \Vert \phi_{e_k} - \phi_{e_k} \star \Phi_2(z) \Vert_{\aL^2(z + \cu)} + \Vert \phi_{e_k} \star \Phi_2(z) - \phi_{e_k} \star \Phi_2 \Vert_{\aL^2(z + \cu)}\\
& & + \Vert \phi_{e_k} \star \Phi_2 - \phi_{e_k} \star \Phi_{\ec^{-1}} \Vert_{\aL^2(z + \cu)}\\
&\leq & 4 \underbrace{\Vert \phi_{e_k} - \phi_{e_k} \star \Phi_2(z) \Vert_{\aL^2(B_2(z))}}_{\text{Apply } \cref{eq:CellCalssical3}} + 4 \underbrace{\Vert \phi_{e_k} \star \Phi_2(z) - \phi_{e_k} \star \Phi_2 \Vert_{\aL^2(B_2(z))}}_{\text{Apply }\cref{eq:CellCalssical4}} \\
& & + \underbrace{\Vert \phi_{e_k} \star \Phi_2 - \phi_{e_k} \star \Phi_{\ec^{-1}} \Vert_{\aL^2(z + \cu)}}_{\text{Apply } \cref{eq:CellCalssical4} }\\
&\leq & \GO_s(C) + \GO_s(C) + \GO_s(C\log^{\frac{1}{2}}(2 + (2\ec)^{-1}))\\
(\cref{eq:OSum})&\leq & \GO_s(C \ell(\ec)). 
\end{eqnarray*}
Here we use $\Phi_2$ since \cref{eq:CellCalssical3} and \cref{eq:CellCalssical4} require that the scale should be bigger than $2$. In last step, we use also the condition $\ec \leq \frac{1}{2}$ to give up the constant term.
\end{enumerate}

Since $\S_{e_k}$ has the same type of estimate \cref{eq:CellCalssical1},\cref{eq:CellCalssical2},\cref{eq:CellCalssical3},\cref{eq:CellCalssical4} as $\phi_{e_k}$, see \cite[Proposition 6.2]{armstrong2018quantitative}, we apply the same procedure to obtain the other half of the \cref{eq:CellEnergy1}.
\end{proof}

\subsection{Detailed $H^{-1}$ and boundary layer estimate}
\label{sec:Details}
In this subection, we complete the proof of \Cref{thm:TwoScale}, which remains to give an explicit random variable in the formula \cref{eq:H1Decomposition2}. This requires to analyze several norms like $\Vert \nabla \cdot ( \a \nabla w - \ab \nabla \vb )\Vert_{\aH^{-1}(\Ur)},  \Vert w - \vb \Vert_{\aL^2(\Ur)},  \Vert \nabla T_{\ec} \Vert_{\aL^2(\Ur)}, \Vert T_{\ec} \Vert_{\aL^2(\Ur)}$. We will make the use of two technical lemmas in \Cref{sec:improve} and \Cref{lem:Cell} to estimate them.

\subsubsection{Estimate of $\Vert \nabla \cdot (\a \nabla w - \ab \nabla \vb )\Vert_{\aH^{-1}(\Ur)}$}
With the help of \Cref{lem:F}, we have 
$$
\Vert \nabla \cdot ( \a \nabla w - \ab \nabla \vb) \Vert_{\aH^{-1}(\Ur)} = \Vert \nabla \cdot \Fi \Vert_{\aH^{-1}(\Ur)} \leq \Vert \Fi \Vert_{\aL^2(\Ur)},
$$
and we use the identity in \cref{eq:F} to obtain 
\begin{eqnarray*}
\Vert \Fi \Vert_{\aL^2(\Ur)} &\leq & \underbrace{\sum_{j=1}^d \Vert (\a - \ab) \nabla (\vb - \vb \star \zeta) \Vert_{\aL^2(\Ur)}}_{\mathbf{H.1}} \\
& &  + \underbrace{\sum_{j,k=1}^{d} \left\Vert \left(\a \phim_{e_k} - \Sm_{e_k}\right) \p{j}\p{k} (\vb \star \zeta) \right\Vert_{\aL^2(\Ur)}}_{\mathbf{H.2}} \\
& & + \underbrace{\sum_{k=1}^{d} \left\Vert  \left(\nabla \S_{e_k} \star \Phi_{\ec^{-1}} - \a \nabla \phi_{e_k} \star \Phi_{\ec^{-1}}\right) \p{k}(\vb \star \zeta)\right\Vert_{\aL^2(\Ur)}}_{\mathbf{H.3}}.
\end{eqnarray*}
We treat the three terms respectively.
For $\mathbf{H.1}$, we recall that $\vb \star \zeta$ means $\Ext(\vb) \star \zeta$ and use the approximation of identity, see for example \cite[Lemma 6.8]{armstrong2018quantitative} 
\begin{equation*}
\mathbf{H.1} \leq \frac{d \Lambda}{\vert \Ur \vert^{\frac{d}{2}}} \Vert \nabla \Ext(\vb) - \nabla \Ext(\vb) \star \zeta \Vert_{L^2(\Rd)} \leq \frac{d \Lambda}{\vert \Ur \vert^{\frac{d}{2}}} \Vert \nabla^2 \Ext(\vb) \Vert_{L^2(\Rd)}.
\end{equation*}
We recall the estimate \cref{eq:Extension} that  
\begin{equation*}
\frac{d \Lambda}{\vert \Ur \vert^{\frac{d}{2}}} \Vert \nabla^2 \Ext(\vb) \Vert_{L^2(\Rd)} \leq C(U,\Lambda, d) \Vert \vb \Vert_{\aH^2(\Ur)},
\end{equation*}
so we get $\mathbf{H.1}  \leq C(U,\Lambda, d) \Vert \vb \Vert_{\aH^2(\Ur)}$.

For $\mathbf{H.2}$, since $\Vert \phim_{e_k}\Vert_{\aL^2(z + \cu)},  \Vert \Sm_{e_k} \Vert_{\aL^2(z + \cu)}$ are obtained in \Cref{lem:Cell}, we could use the \Cref{lem:MixedNorm} where we treat the cell of the scale $\epsilon = 1$ and take $g = \p{j}\p{k} \Ext(\vb)$, and $f = (\a \phim_{e_k} - \Sm_{e_k})$
\begin{eqnarray*}
\mathbf{H.2} &=& \sum_{j,k=1}^{d} \Vert \left(\a \phim_{e_k} - \Sm_{e_k}\right) ( \p{j}\p{k}\Ext(\vb) \star \zeta) \Vert_{\aL^2(\Ur)} \\
 &\leq & \frac{C(\Lambda, d)}{\vert \Ur \vert^{\frac{d}{2}}} \sum_{k=1}^d  \max_{z \in \Zd \cap (\Ur+\cu)} \left(\Vert  \phim_{e_k} \Vert_{\aL^2(z + \cu)}  +  \Vert \Sm_{e_k} \Vert_{\aL^2(z + \cu)} \right) \Vert  \nabla^2 \Ext(\vb) \Vert_{L^2(\Ur + 3\cu)}.
\end{eqnarray*}
Once again we apply the Sobolev extension estimate \cref{eq:Extension} that 
\begin{eqnarray*}
\vert \Ur \vert^{-\frac{d}{2}} \Vert  \nabla^2 \Ext(\vb) \Vert_{L^2(\Ur + 3\cu)} \leq \vert \Ur \vert^{-\frac{d}{2}} \Vert  \nabla^2 \Ext(\vb) \Vert_{L^2(\Rd)} \leq  C(U, d) \Vert \vb \Vert_{\aH^2(\Ur)}.
\end{eqnarray*}
We extract the term of random variable 
\begin{equation}
\label{eq:X1}
\X_1 := \sum_{k=1}^d \max_{z \in \Zd \cap (\Ur+\cu)} \left(\Vert \phim_{e_k}\Vert_{\aL^2(z + \cu)}  +  \Vert \Sm_{e_k} \Vert_{\aL^2(z + \cu)} \right),
\end{equation}
and obtain that $\mathbf{H.2} \leq C(U, \Lambda, d) \X_1 \Vert \vb \Vert_{\aH^2(\Ur)}$. Moreover, \Cref{lem:max} and \Cref{lem:Cell} can be applied here to estimate the size of random variables that  
$$
\X_1 \leq \GO_s\left(C(U,s,d) \ell(\ec)(\log r)^{\frac{1}{s}} \right). 
$$

The above estimation gives a good recipe for the remaining part. For $\mathbf{H.3}$, we have 
\begin{eqnarray*}
\mathbf{H.3} &=& \sum_{k=1}^{d} \Vert  \left(\nabla \S_{e_k} \star \Phi_{\ec^{-1}} - \a \nabla \phi_{e_k} \star \Phi_{\ec^{-1}}\right) (\p{k}\Ext(\vb) \star \zeta)\Vert_{\aL^2(\Ur)} \\
&\leq & C(U, \Lambda, d) \X_2 \Vert \vb \Vert_{\aH^1(\Ur)},
\end{eqnarray*}
where we extract that 
\begin{equation}
\label{eq:X2}
\X_2 := \sum_{k=1}^d  \max_{z \in  \Zd \cap (\Ur + \cu)} \left(\Vert 
\nabla \phi_{e_k} \star \Phi_{\ec^{-1}} \Vert_{\aL^2(z + \cu)}  +  \Vert \nabla \S_{e_k} \star \Phi_{\ec^{-1}} \Vert_{\aL^2(z + \cu)} \right),
\end{equation}
and we apply \Cref{lem:max} and \Cref{lem:Cell} to get
$$
\X_2 \leq \GO_s\left(C(U,s,d) \ec^{\frac{d}{2}}(\log r)^{\frac{1}{s}} \right).
$$

Combing $\mathbf{H.1}, \mathbf{H.2}, \mathbf{H.3}$, we get 
\begin{equation}\label{eq:TwoScale1}
\boxed{\Vert \nabla \cdot (\a \nabla w - \ab \nabla \vb) \Vert_{\aH^{-1}(\Ur)} \leq C(U, \Lambda, d)\left(\Vert \vb \Vert_{\aH^2(\Ur)} + \Vert \vb \Vert_{\aH^2(\Ur)} \X_1, + \Vert \vb \Vert_{\aH^1(\Ur)} \X_2 \right)}.
\end{equation}

\subsubsection{Estimate of $\Vert w - \vb \Vert_{\aL^2(\Ur)}$}
For $\Vert w - \vb \Vert_{\aL^2(\Ur)}$, we use \Cref{lem:MixedNorm}  and \cref{eq:Extension} to obtain that
\begin{eqnarray*}
\Vert w - \vb \Vert_{\aL^2(\Ur)} &=& \Vert \sum_{k=1}^d \phim_{e_k} \p{k} (\Ext(\vb) \star \zeta)\Vert_{\aL^2(\Ur)} \\
&\leq & C(U, \Lambda, d)\sum_{k=1}^d  \max_{z \in \Zd \cap (\Ur+\cu)} \left(\Vert  \phim_{e_k}  \Vert_{\aL^2(z + \cu)}  \right) \Vert \vb \Vert_{\aH^1(\Ur)}\\
&\leq & C(U, \Lambda, d) \Vert \vb \Vert_{\aH^1(\Ur)} \X_1.
\end{eqnarray*}
\begin{equation}\label{eq:TwoScale2}
\Longrightarrow \boxed{\Vert w - \vb \Vert_{\aL^2(\Ur)} \leq C(U, \Lambda, d) \Vert \vb \Vert_{\aH^1(\Ur)} \X_1}.
\end{equation}

\subsubsection{Estimate of $\Vert \nabla T_{\ec} \Vert_{\aL^2(\Ur)},\Vert T_{\ec} \Vert_{\aL^2(\Ur)}$}
Finally, we come to the estimate of $\Vert \nabla T_{\ec} \Vert_{\aL^2(\Ur)},\Vert T_{\ec} \Vert_{\aL^2(\Ur)}$.  We study  $\Vert \nabla T_{\ec} \Vert_{\aL^2(\Ur)}$ at first.
\begin{eqnarray*}
\Vert \nabla T_{\ec} \Vert_{\aL^2(\Ur)} &=& \underbrace{\left\Vert \left(\Ind_{\Rd \backslash U_{r, 2l(\ec)}} \star \frac{1}{\ell^{\frac{d}{2}+1}(\ec)} (\nabla \zeta)\left(\frac{\cdot}{\ell(\ec)}\right) \right) \sum_{k=1}^d \p{k} (\vb \star \zeta) \phim_{e_k} \right\Vert_{\aL^2(\Ur)}}_{\mathbf{T.1}} \\
& & + \underbrace{\left\Vert  \left(\Ind_{\Rd \backslash U_{r, 2\ell(\ec)}} \star \zeta_{\ell(\ec)} \right) \sum_{k=1}^d \p{k} ( \nabla \vb \star \zeta) \phim_{e_k} \right\Vert_{\aL^2(\Ur)}}_{\mathbf{T.2}} \\
& & + \underbrace{\left \Vert  \left(\Ind_{\Rd \backslash U_{r, 2\ell(\ec)}} \star \zeta_{\ell(\ec)} \right) \sum_{k=1}^d \p{k} (\vb \star \zeta) \nabla \phim_{e_k} \right \Vert_{\aL^2(\Ur)}.}_{\mathbf{T.3}}
\end{eqnarray*}
For the term $\mathbf{T.1}$, we use \cref{eq:Extension} and \cref{eq:MixedNorm} that
\begin{eqnarray*}
\mathbf{T.1} &\leq & C \frac{1}{\ell(\ec)} \left\Vert  \sum_{k=1}^d \Ind_{\Ur \backslash U_{r, 2\ell(\ec)}} \p{k}(\Ext(\vb) \star \zeta) \phim_{e_k} \right \Vert_{\aL^2(\Ur)} \\
&\leq & \frac{C(U, d)}{\ell(\ec)}\sum_{k=1}^d  \max_{z \in  \Zd \cap (\Ur \backslash U_{r, 2\ell(\ec)}+\cu)} \left(\Vert  \phim_{e_k}  \Vert_{\aL^2(z +  \cu)}  \right) \Vert  \Ind_{\Ur \backslash U_{r, 2\ell(\ec)}} \vb \Vert_{\aH^1(\Ur)}.
\end{eqnarray*}
Here we should pay attention to one small improvement: The domain of integration is in fact restricted in $\Ur \backslash U_{r, 2\ell(\ec)}$, so we would like to give it a bound in terms of $\aH^2(\Ur)$ rather than $\aH^1(\Ur)$. We borrow a trace estimate in \cite[Proposition A.1]{armstrong2018iterative} that for $f\in H^1(\Ur)$
\begin{equation}
\label{eq:Trace}
\Vert f \Ind_{\Ur \backslash U_{r, 2 \ell(\ec)}} \Vert_{\aL^2(\Ur)} \leq C(U,d) \ell(\ec)^{\frac{1}{2}}
 \Vert f  \Vert^{\frac{1}{2}}_{\aH^1(\Ur)} \Vert f  \Vert^{\frac{1}{2}}_{\aL^2(\Ur)},
\end{equation} 
using \cref{eq:Trace} then we obtain an estimate
\begin{eqnarray*}
\mathbf{T.1} &\leq & \frac{C(U, d)}{\ell^{\frac{1}{2}}(\ec)}\sum_{k=1}^d  \max_{z \in \Zd \cap (\Ur \backslash U_{r, 2\ell(\ec)} + \cu)} \left(\Vert  \phim_{e_k}  \Vert_{\aL^2(z + \cu)}  \right) \Vert \vb \Vert^{\frac{1}{2}}_{\aH^2(\Ur)}  \Vert \vb \Vert^{\frac{1}{2}}_{\aH^1(\Ur)},
\end{eqnarray*}
so we define the random variable
\begin{equation}
\label{eq:Y1}
\Y_1  := \sum_{k=1}^d \max_{z \in \Zd \cap (\Ur \backslash U_{r, 2\ell(\ec)} + \cu)} \left(\Vert  \phim_{e_k}  \Vert_{\aL^2(z + \cu)} \right),
\end{equation}
and we have the estimate by \Cref{lem:max} and \Cref{lem:Cell}
$$
\Y_1 \leq \GO_s\left(C(U,s,d) \ell(\ec)(\log r)^{\frac{1}{s}} \right).
$$

We skip the details since they are analogue to the previous part. $\mathbf{T.3}$ follows from the same type of estimate as $\mathbf{T.1}$ and $\mathbf{T.2}$ is routine where we suffices to apply \Cref{lem:MixedNorm} and \cref{eq:Trace}. We find that
\begin{eqnarray*}
\mathbf{T.2} &\leq & C(U, d)  \Vert \vb \Vert_{\aH^2(\Ur)} \Y_1, \\
\mathbf{T.3} &\leq & C(U,d) \Vert \vb \Vert^{\frac{1}{2}}_{\aH^2(\Ur)} \Vert \vb \Vert^{\frac{1}{2}}_{\aH^1(\Ur)} \ell^{\frac{1}{2}} (\ec)\X_2.
\end{eqnarray*}

The three estimates of $\mathbf{T.1}, \mathbf{T.2}, \mathbf{T.3}$ implies that 
\begin{subequations}\label{eq:TwoScale3}
\begin{empheq}[box=\widefbox]{align}
\Vert \nabla T_{\ec} \Vert_{\aL^2(\Ur)} &\leq  C(U, d) \Vert \vb \Vert^{\frac{1}{2}}_{\aH^2(\Ur)}  \Vert \vb \Vert^{\frac{1}{2}}_{\aH^1(\Ur)}  \frac{1}{\ell^{\frac{1}{2}}(\ec)} \Y_1 \\
& + C(U,d)\left(  \Vert \vb \Vert_{\aH^2(\Ur)} \Y_1 +\Vert \vb \Vert^{\frac{1}{2}}_{\aH^2(\Ur)} \Vert \vb \Vert^{\frac{1}{2}}_{\aH^1(\Ur)} \ell^{\frac{1}{2}} (\ec)\X_2\right).
\end{empheq}
\end{subequations}

Finally, we find that $\Vert T_{\ec} \Vert_{\aL^2(\Ur)}$ has been contained in the estimate $\mathbf{T.1}$ that 
\begin{equation}\label{eq:TwoScale4}
\boxed{\Vert T_{\ec} \Vert_{\aL^2(\Ur)} \leq C(U,d)  \Vert \vb \Vert^{\frac{1}{2}}_{\aH^2(\Ur)} \Vert \vb \Vert^{\frac{1}{2}}_{\aH^1(\Ur)}\ell^{\frac{1}{2}} (\ec)\Y_1}.
\end{equation}

\cref{eq:H1Decomposition2}, \cref{eq:TwoScale1}, \cref{eq:TwoScale2}, \cref{eq:TwoScale3}, \cref{eq:TwoScale4} conclude the proof of \Cref{thm:TwoScale}. We have 
\begin{eqnarray*}
\Vert v - w \Vert_{\aH^1(\Ur)} &\leq &  C(U, \Lambda)\left( \vphantom{\frac{1}{2}}\Vert \a \nabla w - \ab \nabla \vb \Vert_{\aH^{-1}(\Ur)} + \mu \Vert w - \vb \Vert_{\aL^2(\Ur)} \right. \\
 & & \qquad\left. + \Vert \nabla T_{\ec} \Vert_{\aL^2(\Ur)} + (\frac{1}{r}+ \mu )\Vert T_{\ec} \Vert_{\aL^2(\Ur)} \right)\\
& \leq & C(U,\Lambda,d) \left( \vphantom{\frac{1}{2}} \Vert \vb \Vert_{\aH^2(\Ur)} + \Vert \vb \Vert_{\aH^2(\Ur)} \X_1 + \Vert \vb \Vert_{\aH^1(\Ur)} \X_2 + \mu \Vert \vb \Vert_{\aH^1(\Ur)} \X_1 \right. \nonumber \\
& & \qquad \left. + \ell(\ec)^{\frac{1}{2}}\Vert \vb \Vert^{\frac{1}{2}}_{\aH^2(\Ur)}  \Vert \vb \Vert^{\frac{1}{2}}_{\aH^1(\Ur)}\left(\left(\mu + \frac{1}{r}+ \frac{1}{\ell(\ec)}\right) \Y_1  + \X_2 \right) + \Vert \vb \Vert_{\aH^2(\Ur)} \Y_1 \right) \\
&=&  C(U,\Lambda,d) \left[ \vphantom{\frac{1}{2}} \Vert \vb \Vert_{\aH^2(\Ur)} + \left( \Vert \vb \Vert_{\aH^2(\Ur)} + \mu \Vert \vb \Vert_{\aH^1(\Ur)}\right) \X_1  \right. \nonumber \\
& & \qquad \left. + \left( \ell(\ec)^{\frac{1}{2}}\Vert \vb \Vert^{\frac{1}{2}}_{\aH^2(\Ur)}  \Vert \vb \Vert^{\frac{1}{2}}_{\aH^1(\Ur)} + \Vert \vb \Vert_{\aH^1(\Ur)} \right) \X_2   \right.\\
& & \qquad \left. + \left( \ell(\ec)^{\frac{1}{2}} \left(\mu + \frac{1}{r}+ \frac{1}{\ell(\ec)}\right)\Vert \vb \Vert^{\frac{1}{2}}_{\aH^2(\Ur)}  \Vert \vb \Vert^{\frac{1}{2}}_{\aH^1(\Ur)} + \Vert \vb \Vert_{\aH^2(\Ur)}\right) \Y_1   \right].
\end{eqnarray*}

We add one table of $\X_1, \X_2, \Y_1$ to check the its typical size.

\begin{figure}[h!]
\begin{adjustwidth}{}{}
\begin{tabular}{|c | c | c |} 
 \hline
 R.V & Expression & $\GO_s$ size \\ [0.5ex] 
 \hline
 $\X_1$ & $\sum_{k=1}^d \max\limits_{z \in \Zd \cap (\Ur+\cu)} \left(\Vert \phim_{e_k}\Vert_{\aL^2(z + \cu)}  +  \Vert \Sm_{e_k} \Vert_{\aL^2(z + \cu)} \right)$ & $\GO_s\left(C \ell(\ec)(\log r)^{\frac{1}{s}} \right)$ \\ 
 $\X_2$ & $\sum_{k=1}^d  \max\limits_{z \in  \Zd \cap (\Ur + \cu)} \left(\Vert 
\nabla \phi_{e_k} \star \Phi_{\ec^{-1}} \Vert_{\aL^2(z + \cu)}  +  \Vert \nabla \S_{e_k} \star \Phi_{\ec^{-1}} \Vert_{\aL^2(z + \cu)} \right)$ & $\GO_s\left(C \ec^{\frac{d}{2}}(\log r)^{\frac{1}{s}} \right)$ \\
 $\Y_1$ & $\sum_{k=1}^d \max\limits_{z \in \Zd \cap (\Ur \backslash U_{r, 2\ell(\ec)} + \cu)} \left(\Vert  \phim_{e_k}  \Vert_{\aL^2(z + \cu)} \right)$ & $\GO_s\left(C \ell(\ec)(\log r)^{\frac{1}{s}} \right)$  \\

 \hline
\end{tabular}
\end{adjustwidth}
\caption{A table of random variables $\X_1, \X_2, \Y_1$.}
\label{fig:factors}
\end{figure}

\section{Iteration estimate}
\label{sec:iteration}
In this part, we use \Cref{thm:TwoScale} to analyze the algorithm, and we give at first an $H^1, H^2$ à priori estimate.

\subsection{Proof of an $H^1, H^2$ estimate}
\begin{lemma}
\label{lem:H2Control}
In \cref{eq:iterative}, we have a control 
$$
\Vert \ub \Vert_{\aH^1(\Ur)} + \lambda^{-1}\Vert \ub \Vert_{\aH^2(\Ur)} \leq C(U,\Lambda,d) \Vert v - u \Vert_{\aH^1(\Ur)}.
$$
\end{lemma}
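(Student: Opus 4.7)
The plan is to trace down the iterative system \cref{eq:iterative} step by step, estimating $u_0$ first and then $\ub$ in both the $\aH^1$ and $\aH^2$ norms. The first observation is that $u$ solves \cref{eq:main}, so $f = -\nabla \cdot \a \nabla u$, and the equation for $u_0$ rewrites as
$$(\lambda^2 - \nabla \cdot \a \nabla) u_0 = \nabla \cdot \a \nabla (v-u) \qquad \text{in } \Ur,$$
with $u_0 \in H^1_0(\Ur)$. Since $v - u \in H^1_0(\Ur)$ (both $v$ and $u$ share the boundary value $g$), Poincar\'e's inequality makes $\Vert \nabla(v-u) \Vert_{\aL^2(\Ur)}$ comparable to $\Vert v-u \Vert_{\aH^1(\Ur)}$ up to $C(U,d)$.

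Testing the rewritten equation by $u_0$ itself, using uniform ellipticity and Young's inequality to absorb $\Vert \nabla u_0 \Vert_{\aL^2}$, yields the basic energy estimate
$$\lambda \Vert u_0 \Vert_{\aL^2(\Ur)} + \Vert \nabla u_0 \Vert_{\aL^2(\Ur)} \leq C(\Lambda) \Vert v - u \Vert_{\aH^1(\Ur)}.$$
The crucial upgrade is to an $\aH^{-1}$ bound carrying a factor $\lambda^{-2}$: rearranging the equation gives $\lambda^2 u_0 = \nabla \cdot \a \nabla (u_0 + v - u)$, so for any test function $g \in H^1_0(\Ur)$ with $\Vert g \Vert_{\aH^1(\Ur)} \le 1$, an integration by parts produces
$$\frac{1}{|\Ur|}\int_{\Ur} u_0 \, g = -\frac{1}{\lambda^2 |\Ur|} \int_{\Ur} \a \nabla (u_0 + v - u) \cdot \nabla g,$$
and together with the energy estimate this gives $\Vert u_0 \Vert_{\aH^{-1}(\Ur)} \leq C(\Lambda) \lambda^{-2} \Vert v - u \Vert_{\aH^1(\Ur)}$.

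For the $\aH^1$ bound on $\ub$, I would test the second equation $-\nabla \cdot \ab \nabla \ub = \lambda^2 u_0$ against $\ub \in H^1_0(\Ur)$, use uniform ellipticity of $\ab$, the $\aH^{-1}$--$\aH^1$ duality for $\frac{1}{|\Ur|} \int u_0 \ub$, and Poincar\'e, to obtain
$$\Vert \ub \Vert_{\aH^1(\Ur)} \leq C(U,\Lambda,d) \lambda^2 \Vert u_0 \Vert_{\aH^{-1}(\Ur)} \leq C \Vert v - u \Vert_{\aH^1(\Ur)}.$$
For the $\aH^2$ bound, since $\ab$ is constant and $\Ur$ has $C^{1,1}$ boundary, classical constant-coefficient elliptic regularity applies. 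To keep the constant independent of $r$, I would rescale to the fixed domain $U$ via $y = x/r$, where the equation becomes $-\nabla_y \cdot \ab \nabla_y \tilde \ub = r^2 \lambda^2 \tilde u_0$ on $U$. Applying $H^2$ regularity on $U$ and converting back through the scaling of the weighted norms yields
$$\Vert \ub \Vert_{\aH^2(\Ur)} \leq C(U,\Lambda,d) \lambda^2 \Vert u_0 \Vert_{\aL^2(\Ur)} \leq C \lambda \Vert v - u \Vert_{\aH^1(\Ur)},$$
where the last step uses $\Vert u_0 \Vert_{\aL^2(\Ur)} \leq C \lambda^{-1} \Vert v - u \Vert_{\aH^1(\Ur)}$ from the energy estimate. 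Summing the two bounds yields the lemma.

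The main subtle point is the duality step producing the $\lambda^{-2}$ factor in the $\aH^{-1}$ bound of $u_0$: a direct Cauchy--Schwarz/Poincar\'e argument on $\frac{\lambda^2}{|\Ur|} \int u_0 \ub$ would only give $\Vert \ub \Vert_{\aH^1} \lesssim r \lambda \Vert v - u \Vert_{\aH^1}$, which is useless because $r\lambda \geq 1$. The $\aH^2$ estimate gains the extra factor of $\lambda$ claimed in the lemma precisely because the constant-coefficient elliptic regularity converts $\lambda^2 u_0$ directly into second derivatives of $\ub$, while the energy estimate loses only $\lambda^{-1}$ in bounding $\Vert u_0 \Vert_{\aL^2}$.
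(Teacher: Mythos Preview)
Your proof is correct and follows essentially the same approach as the paper's: the energy estimate for $u_0$, the rewriting $\lambda^2 u_0 = \nabla \cdot \a \nabla(u_0+v-u)$, and the classical $H^2$ regularity for $\ub$ are all used identically. The only cosmetic difference is that you package the key identity as an intermediate $\aH^{-1}$ bound on $u_0$ and then invoke duality, whereas the paper substitutes it directly into the second equation to get $-\nabla\cdot\ab\nabla\ub = -\nabla\cdot\a\nabla(u-v-u_0)$ and tests by $\ub$ --- the same integration by parts without naming the $\aH^{-1}$ norm.
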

\begin{proof}
We test  the first equation $(\lambda^2 - \nabla \cdot \a \nabla)u_0 = - \nabla \cdot \a \nabla (u-v) $ in \cref{eq:iterative}  by $u_0$ and use the ellipticity condition to obtain 
\begin{eqnarray*}
\lambda^2 \Vert u_0 \Vert^2_{L^2(\Ur)} + \Lambda^{-1}\Vert \nabla u_0 \Vert^2_{L^2(\Ur)} &\leq & 
\lambda^2 \Vert u_0 \Vert^2_{L^2(\Ur)} + \int_{\Ur} \nabla u_0 \cdot \a \nabla u_0 \\
&=& \int_{\Ur} \nabla u_0 \cdot \a \nabla (u - v)  \\
&\leq & \Lambda  \Vert \nabla(v - u) \Vert_{L^2(\Ur)} \Vert \nabla u_0 \Vert_{L^2(\Ur)}\\
\Longrightarrow \Vert \nabla u_0 \Vert_{\aL^2(\Ur)} &\leq & \Lambda^2 \Vert \nabla(v - u) \Vert_{\aL^2(\Ur)}.
\end{eqnarray*}
We put back this term in the inequality, we also obtain that
\begin{equation}
\label{eq:L2U0}
\lambda\Vert u_0 \Vert_{\aL^2(\Ur)} \leq  \Lambda^{\frac{3}{2}} \Vert \nabla(v - u) \Vert_{\aL^2(\Ur)}.
\end{equation} 
Using this estimate, we obtain that of $\nabla \ub$ by testing $- \nabla \cdot \ab \nabla \ub = -\nabla \cdot \a \nabla (u-v-u_0)$ with $\ub$
\begin{eqnarray*}
\int_{\Ur} \nabla \ub \cdot \ab \nabla \ub  &=& \int_{\Ur} \nabla \ub \cdot \a \nabla (u-v-u_0) \\
\Longrightarrow \Vert \nabla \ub \Vert_{\aL^2(\Ur)} &\leq &  \Lambda^2 \Vert \nabla (u-v-u_0) \Vert_{\aL^2(\Ur)} \\
&\leq &   \Lambda^2 \Vert \nabla (u-v) \Vert_{\aL^2(\Ur)} +  \Lambda^2 \Vert \nabla u_0 \Vert_{\aL^2(\Ur)} \\
&\leq & C(U,\Lambda, d)\Vert \nabla (u-v) \Vert_{\aL^2(\Ur)}.  
\end{eqnarray*}

Finally, we calculate the $H^2$ norm of $\ub$. Because it is the solution of ${-\nabla \cdot \ab \nabla \ub = \lambda^2 u_0}$, we apply the classical $H^2$ estimate of elliptic equation (see  \cite[Theorem 6.3.2.4]{evans1998partial})
\begin{eqnarray*}
\Lambda^{-1} \Vert  \ub \Vert_{\aH^2(\Ur)} &\leq & \lambda^2 \Vert u_0 \Vert_{L^2(\Ur)} \\
(\text{ Using } \cref{eq:L2U0})&\leq & \lambda \Lambda^{\frac{3}{2}} \Vert \nabla(v - u) \Vert_{L^2(\Ur)} \\
\Longrightarrow \Vert \ub \Vert_{\aH^2(\Ur)} &\leq & \lambda \Lambda^{\frac{5}{2}} \Vert \nabla(v - u) \Vert_{\aL^2(\Ur)}.
\end{eqnarray*}
\end{proof}

\subsection{Proof of the main theorem}
With all these tools in hand, we can now prove \Cref{thm:main}. We denote by $\R(\ec, \mu, r,  \a, d, U, \vb)$ the right hand side of  \cref{eq:TwoScaleH1}, that is 
$$
\Vert v - w\Vert_{\aH^1(\Ur)} \leq \R(\ec, \mu, r,  \a, d, U, \vb).
$$

\begin{proof}
We take the first and second equations in the \cref{eq:iterative} and use the equation \cref{eq:main}
\begin{eqnarray*}
-\nabla \cdot \ab \nabla \ub &=& \lambda^2 u_0  \\
&=& f + \nabla \cdot \a \nabla (v + u_0) \\
&=& -\nabla \cdot \a \nabla (u - v - u_0).
\end{eqnarray*}
This is in the frame of \Cref{thm:TwoScale} thanks to the classical $H^2$ theory that $\ub \in H^2(\Ur)$. We apply \Cref{thm:TwoScale} with abuse of notation of the two scale expansion 
$$
w := \ub +  \sum_{k=1}^d \p{k} (\Ext(\ub) \star \zeta) \phim_{e_k},
$$
with $\Ext(\ub)$ satisfying \cref{eq:Extension}. Then we obtain that
\begin{equation}
\label{eq:IteraControl1}
\Vert w - (u - v - u_0)\Vert_{\aH^1(\Ur)} \leq \R(\ec, 0, r,  \a, d, U, \ub).
\end{equation}
The third equation of \cref{eq:iterative} $(\lambda^2 - \nabla \cdot \a \nabla) \ut = (\lambda^2 - \nabla \cdot \ab \nabla) \ub$ is also of the form of the \Cref{thm:TwoScale}, so we obtain
\begin{equation}
\label{eq:IteraControl2}
\Vert \ut - w \Vert_{\aH^1(\Ur)} \leq \R(\ec, \ec, r,  \a, d, U, \ub).
\end{equation} 
We combine this two estimates and use the triangle inequality to obtain
\begin{equation}
\label{eq:IteraControl3}
\Vert (v + u_0 + \ut) - u \Vert_{\aH^1(\Ur)} \leq \R(\ec, 0, r,  \a, d, U, \ub) + \R(\ec, \lambda, r,  \a, d, U, \ub).
\end{equation}  
It remains to see how to adapt $\R(\ec, \mu, r,  \a, d, U, \ub)$ in a proper way in the context of \cref{eq:main}. We plug in the formula in \Cref{lem:H2Control} to separate all the norms of $H^1$ and $H^2$ and use $0 < \mu < \lambda$.
\begin{align*}
\R(\ec, \mu, r, \a, d, U, \ub) &\leq  C(U,\Lambda,d) \left[\vphantom{\frac{1}{2}}  \lambda + \lambda \X_1 + \left(1 + \ell(\ec)^{\frac{1}{2}} \lambda^{\frac{1}{2}} \right) \X_2 \right. \\
 & \qquad \left. + \left(\ell(\ec)^{\frac{1}{2}} \lambda^{\frac{1}{2}} + 1 \right) \left(\lambda + \frac{1}{r}+ \frac{1}{\ell(\ec)}\right) \Y_1   \right] \Vert v - u \Vert_{\aH^1(\Ur)} .
\end{align*}
By checking \cref{fig:factors} and notice that the largest term is $\ell(\ec)^{-\frac{1}{2}} \lambda^{\frac{1}{2}} \Y_1$, so we obtain that the factor is of type $\GO_s\left(C(U,\Lambda,s, d)(\log r)^{\frac{1}{s}}  \ell(\lambda)^{\frac{1}{2}}\lambda^{\frac{1}{2}}) \right)$ as desired.
\end{proof}

\paragraph{Acknowledgment}
I am grateful to Jean-Christophe Mourrat for his suggestion to study this topic, helpful discussions and detailed reading of the article.


\bibliographystyle{abbrv}
\bibliography{UniformBoundRef}

\end{document}